\newtheorem{theorem}{Theorem}
\newtheorem{lemma}[theorem]{Lemma}
\newtheorem{proposition}[theorem]{Proposition}
\theoremstyle{definition}
\theoremstyle{remark}
\newtheorem{remark}[theorem]{Remark}
\numberwithin{equation}{section}
\newcommand{\dfn}[1]{\textit{#1}}
\newcommand{\ty}{\nabla Y}
\newcommand{\yt}{Y \nabla}
\newcommand{\etf}{G_{11,35}} 
\newcommand{\tts}{G_{10,26}} 
\newcommand{\tth}{G_{10,30}} 
\begin{document}

\title[IK graphs with IL simple minors]{Intrinsically knotted graphs with linklessly embeddable simple minors}

\date{\today}
\author[Mattman, Naimi, Pavelescu, and Pavelescu]{
Thomas W.~Mattman, Ramin Naimi, Andrei Pavelescu, and Elena Pavelescu
}

\begin{abstract}
It has been an open question whether the deletion or contraction
of an edge in an intrinsically knotted graph
always yields an intrinsically linked graph.
We present a new intrinsically knotted graph that shows
the answer to both questions is no.
\end{abstract}

\maketitle
\rhead{IK}

\section{Introduction}
\label{introduction}

A graph is  \dfn{intrinsically knotted} (\dfn{intrinsically linked}) if every embedding of it in $S^3$ contains a nontrivial knot (2-component link).
We abbreviate intrinsically knotted (linked) as IK (IL), 
and not intrinsically knotted (linked) as nIK (nIL).
Robertson, Seymour, and Thomas ~\cite{RST} showed that every IK graph is IL. 
It is also known that coning one vertex over an IL graph yields an IK graph.
(This is shown by combining the work of \cite{F2}, \cite{RST}, and \cite{Sa}.)
However, it has been difficult to make the relationship between IK and IL graphs stronger. For example, Adams~\cite{A}
asked if deleting a vertex from an IK graph always yields an IL graph, 
but Foisy~\cite{F} provided a counterexample. 
Deleting a vertex from a graph also deletes all edges incident to that vertex. 
So it might seem more likely that deleting, or contracting, a single edge of an IK 
graph should leave it IL. 
Naimi, Pavelescu, and Schwartz~\cite{NPS} 
tried to show that this is the case when the edge belongs to a $3$-cycle,
but their proof contained an error (which we will describe in Section~\ref{erratum}).
They also asked if deleting or contracting an edge in an IK graph always yields an IL graph.
We verify (using a computer program) that the answer to this question is yes for graphs of order at most 9;
but we show that in general the answer is no. 
We present an IK graph $\etf$ of order 11 and size 35 
with edges $e$ and $f$
such that neither $\etf-e$ (edge deletion) nor $\etf /f$ (edge contraction) is IL. 
We argue that $\etf$ is a minimal order example of an IK graph 
that yields a nIL graph by deleting one edge;
and that ten is the smallest order for an IK graph 
that yields a nIL graph by contracting one edge.
The graph $\etf$ is also a counterexample to the main result of ~\cite{NPS}.

Graphs that are IK but yield a nIL graph 
by deleting one vertex or edge or by contracting one edge
are intriguing from the perspective of Colin de Verdi\`ere's graph invariant $\mu$. 
This is an integer-valued graph invariant that is difficult to compute in general;
its value is known only for certain classes of graphs with ``nice'' topological properties.
For example, for any graph $G$, 
$\mu(G) \leq 3$ if and only if $G$ is planar~\cite{dV},
and $\mu(G) \leq 4$ if and only if $G$ is nIL~\cite{HLS}. 

An important open question is how to characterize graphs $G$ with $\mu(G) \le  5$. 
Even though many known minor minimal IK (MMIK) graphs have $\mu$-invariant 6,
intrinsic knottedness is not the answer.
A \dfn{minor} of a graph $G$ is a graph obtained 
by contracting zero or more edges in a subgraph of $G$.
We'll say an \dfn{edge deletion minor} (\dfn{edge contraction minor}) of $G$
is a graph obtained by deleting (contracting) exactly one edge of $G$.
Both are called \dfn{simple minors} of $G$.
As we explain in Section~\ref{sec:mu5IK}, if an IK graph $G$ has a nIL
simple minor then $\mu(G) = 5$.
Thus, our graph $\etf$, together with other IK graphs obtained from it (as described in Section~\ref{sec:mu5IK}), 
join Foisy's graph as new examples of IK graphs with $\mu$-invariant 5.
These examples show that $\mu(G) \leq 5$ is not equivalent to $G$ being nIK.

In the next section we describe the graph $\etf$ and we show it is IK and minor minimal for that property
in Sections 3 and 4, respectively. In Section 5 we make some observations about the Colin de Verdi{\`e}re invariant
and prove that 10 is the least order for an IK graph with an edge contraction minor that is IL. Section 6 goes over
the error in~\cite{NPS} and we conclude with an Appendix that provides edge lists for three
graphs discussed in the paper. 

To complete this introduction, we provide several definitions.
A graph $G$ is \dfn{$n$-apex} if one can delete $n$ vertices from $G$ to obtain a planar graph;
$G$ is \dfn{apex} if it is 1-apex, and 0-apex is a synonym for planar. 
A graph $G$ is \dfn{minor minimal} with respect to a property if
$G$ has that property but no minor of it has that property.
The complete graph on $n$ vertices is denoted as $K_n$.
$V(G)$ and $E(G)$ denote, respectively, the vertex set and the edge set of $G$.
 A graph $G$ is the \textit{clique sum} of two subgraphs $G_1$ and $G_2$ over $K_n$ if $V(G)=V(G_1)\cup V(G_2)$, $E(G)=E(G_1)\cup E(G_2)$, and the subgraphs induced in $G_1$ and $G_2$ by $V(G_1)\cap V(G_2)$ are both isomorphic to $K_n$.
 We use the notation $G=G_1\oplus_{K_n}G_2$.  
 The \dfn{$\ty$-move} and \dfn{$\yt$-move} are defined as shown in Figure~\ref{fig-TYmove}.
The \dfn{family} of a graph $G$ is the set of all graphs obtained from $G$
by doing zero or more $\ty$ and $\yt$ moves.
The Petersen family of graphs is the family of the Petersen graph
(which is also the family of $K_6$).

 \begin{figure}[ht]
 \centering
 \includegraphics[width=70mm]{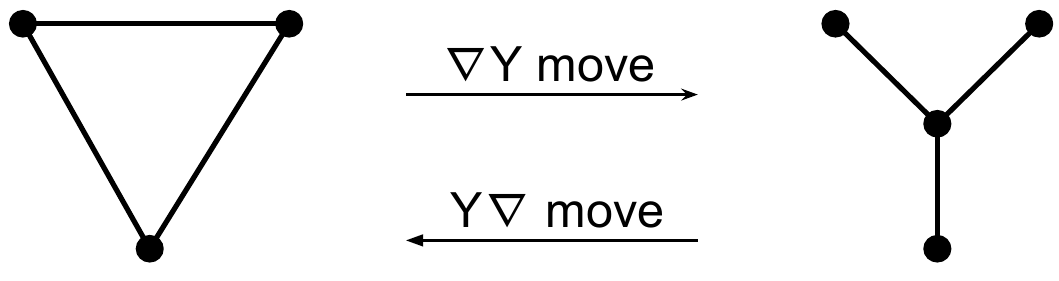}
 \caption{$\ty$ and $\yt$ moves.
 \label {fig-TYmove}
 }
\end{figure}


\section{The graph $\etf$}
\label{graph11}

We describe a sequence of graphs and graph operations used to construct the graph $\etf$.
Let $H$ denote the graph in Figure ~\ref{apex1}(a). Deleting the vertex labeled 4, one obtains the maximal planar graph $H'$, depicted in Figure~\ref{apex1}(b). 
This implies that $H$ is an apex graph; thus it is nIL by \cite{Sa}. 
Similarly, the graph $K$ shown in Figure ~\ref{apex2}(a) is nIL
since deleting vertex 5 from $K$ yields a maximal planar graph, as in Figure~\ref{apex2}(b). 

 \begin{figure}[ht]
\begin{center}
\begin{picture}(350, 140)
\put(0,0){\includegraphics[width=5in]{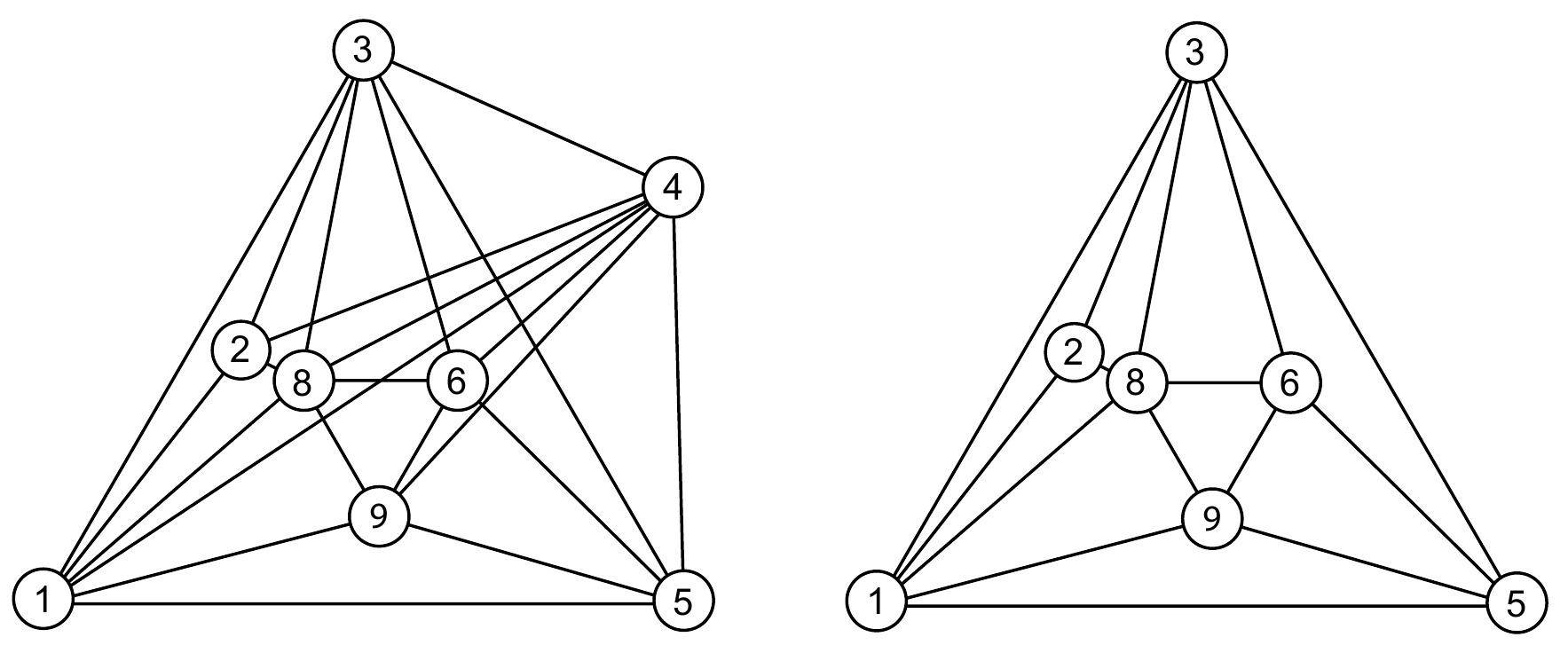}}
\end{picture}
\caption{(Left)  $H$ is apex.  (Right)  $H'$ is maximal planar. }
\label{apex1}
\end{center}
\end{figure} 

 \begin{figure}[ht]
\begin{center}
\begin{picture}(350, 140)
\put(0,0){\includegraphics[width=5in]{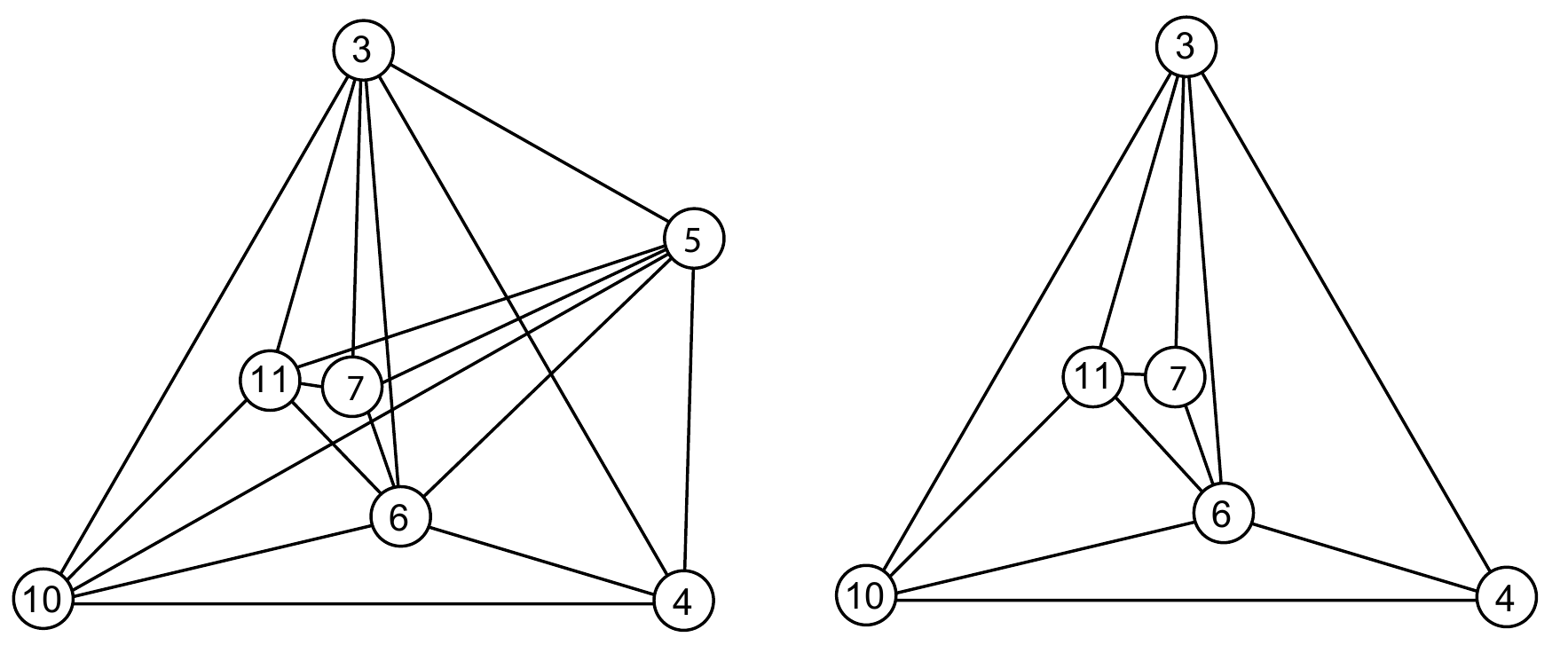}}
\end{picture}
\caption{(Left)  $K$ is apex.  (Right)  $K'$ is maximal planar. }
\label{apex2}
\end{center}
\end{figure}

Notice that deleting the vertices 3, 4, 5, and 6 from both $H$ and $K$ produces connected subgraphs.
So, by Lemma~14 of \cite{NPP}, the clique sum of $H$ and $K$ over the $K_4$ induced by $\{3, 4, 5, 6\}$ is a nIL graph, denoted by $M$ and depicted in Figure ~\ref{maxnIL}.

 \begin{figure}[htpb!]
\begin{center}
\begin{picture}(250, 160)
\put(0,0){\includegraphics[width=3.5in]{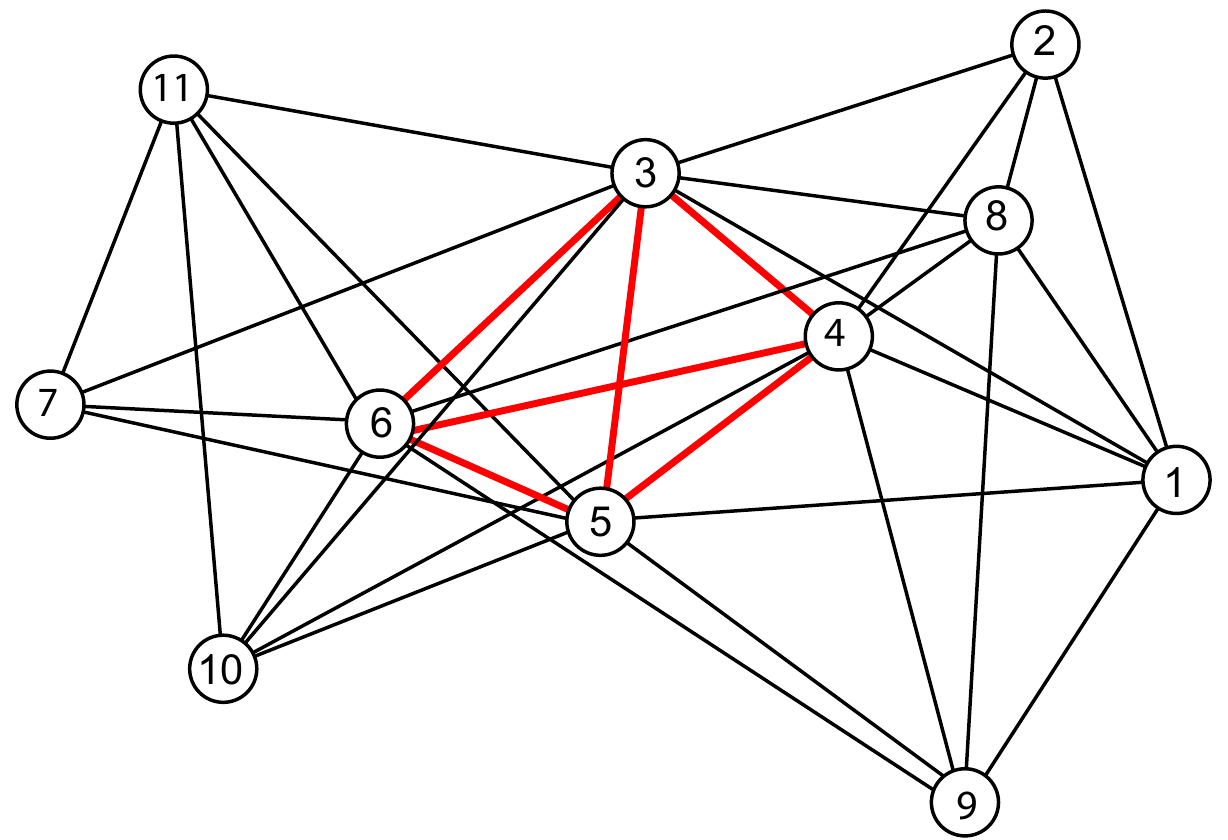}}
\end{picture}
\caption{$M\simeq H\oplus_{K_4}K$ }
\label{maxnIL}
\end{center}
\end{figure} 

The graph $\etf$ is obtained by adding the edge $(2,11)$ to the nIL graph $M$ 
(see Figure ~\ref{ik11}).
We prove in Section ~\ref{Order 10 IK minor} that $\etf$ is IK.
We have thus obtained an IK graph that has a nIL edge deletion minor.
Furthermore, since the edge $(2,11)$ is in a 3-cycle in $\etf$,
this also gives a counterexample to the main result of \cite{NPS}.
Notice that contracting the edge $(2,3)$ in $\etf$
yields a graph that is a minor of $M$, and therefore nIL. 
Hence, $\etf$ also has a nIL edge contraction minor. 
The edge list of $\etf$ is given in the Appendix~\ref{appndx}.

 \begin{figure}[htpb!]
\begin{center}
\begin{picture}(250, 190)
\put(0,0){\includegraphics[width=3.8in]{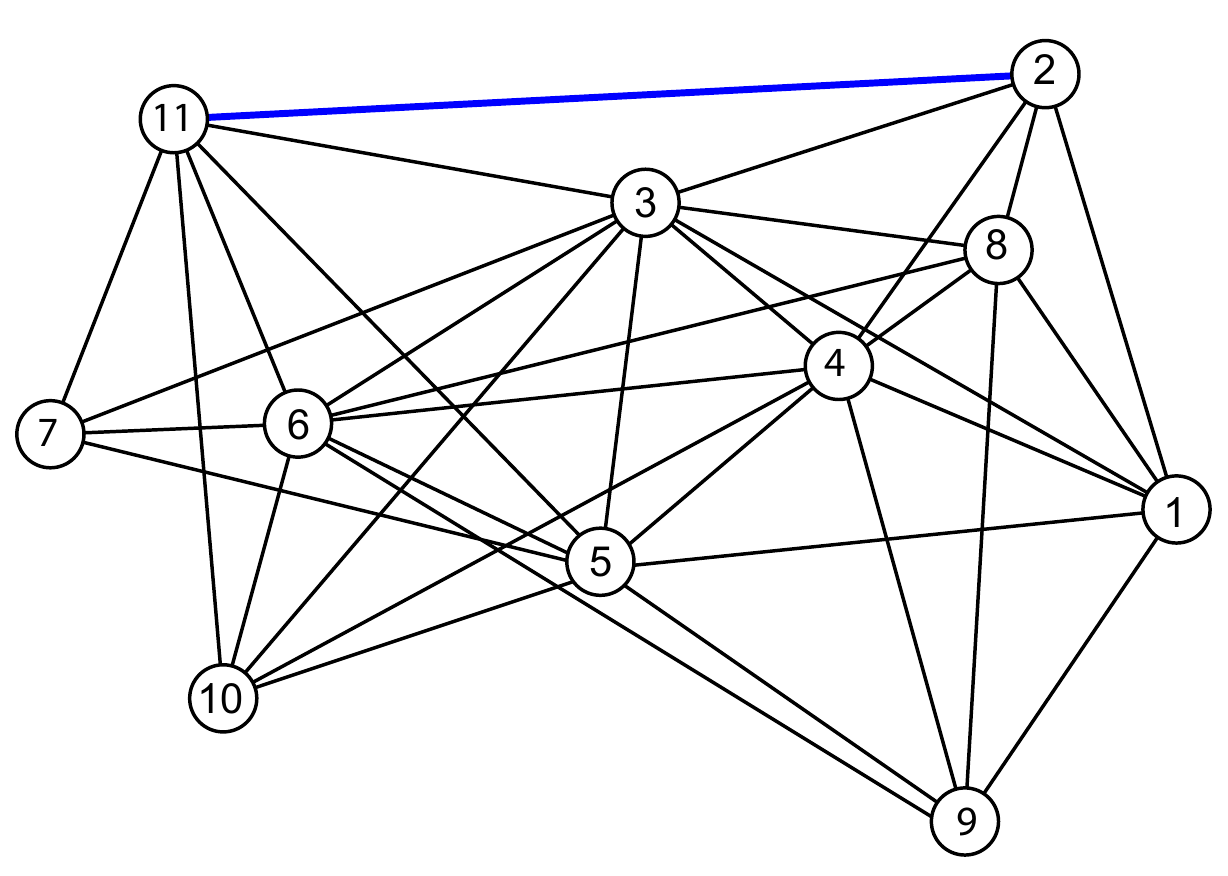}}
\end{picture}
\caption{The graph $\etf$. }
\label{ik11}
\end{center}
\end{figure}

 \begin{remark}
The edge $(2,3)$ in $\etf$ is triangular (i.e., it belongs to one or more triangles) .
So contracting it results in the deletion of parallel edges.
One can ask whether contracting a non-triangular edge in an IK graph can result in a nIL graph.
The answer is yes:
In $\etf$, if we do a $\ty$ move on the triangle with vertices $2,3,11$,
we obtain a new IK graph $G'$ with a new vertex, denoted $x$.
Contracting the edge $(x,3)$ (which is non-triangular) in $G'$ 
yields a graph isomorphic to $\etf - (2,11)$, which is nIL.
\end{remark}

 \begin{remark}
 The graph $\etf$ is a minimal order IK graph with a nIL edge deletion minor. 
To verify this, we took every maxnIL graph of order 10 (there are 107 of them~\cite{NOPP}),
and checked (with computer assistance) that adding one edge to it never yields an IK graph.  
However, 11 is not the smallest order of an IK graph that has a nIL edge contraction minor. 
The graph $\tth$ depicted in Figure ~\ref{mmik10} is a minor minimal IK graph of order 10. 
Contracting the edge $(2,6)$ gives the nIL minor in Figure ~\ref{G1030ce}(a).
This graph is nIL since adding the edge $(8,9)$ produces a graph isomorphic to the clique sum, over the $K_4$ subgraph induced by $\{2,3,8,9\}$, 
of $K_5$ and a subgraph isomorphic to $H$ introduced in Figure \ref{apex1}.
By the following proposition, $\tth$ is
a minimal order IK graph with a nIL edge contraction minor. 
In Section~\ref{sec:mu5IK}, we show that
$\tth$ is also an example of a MMIK graph with $\mu$-invariant 5.

 \end{remark}

 \begin{figure}[htpb!]
\begin{center}
\begin{picture}(250, 190)
\put(0,0){\includegraphics[width=4in]{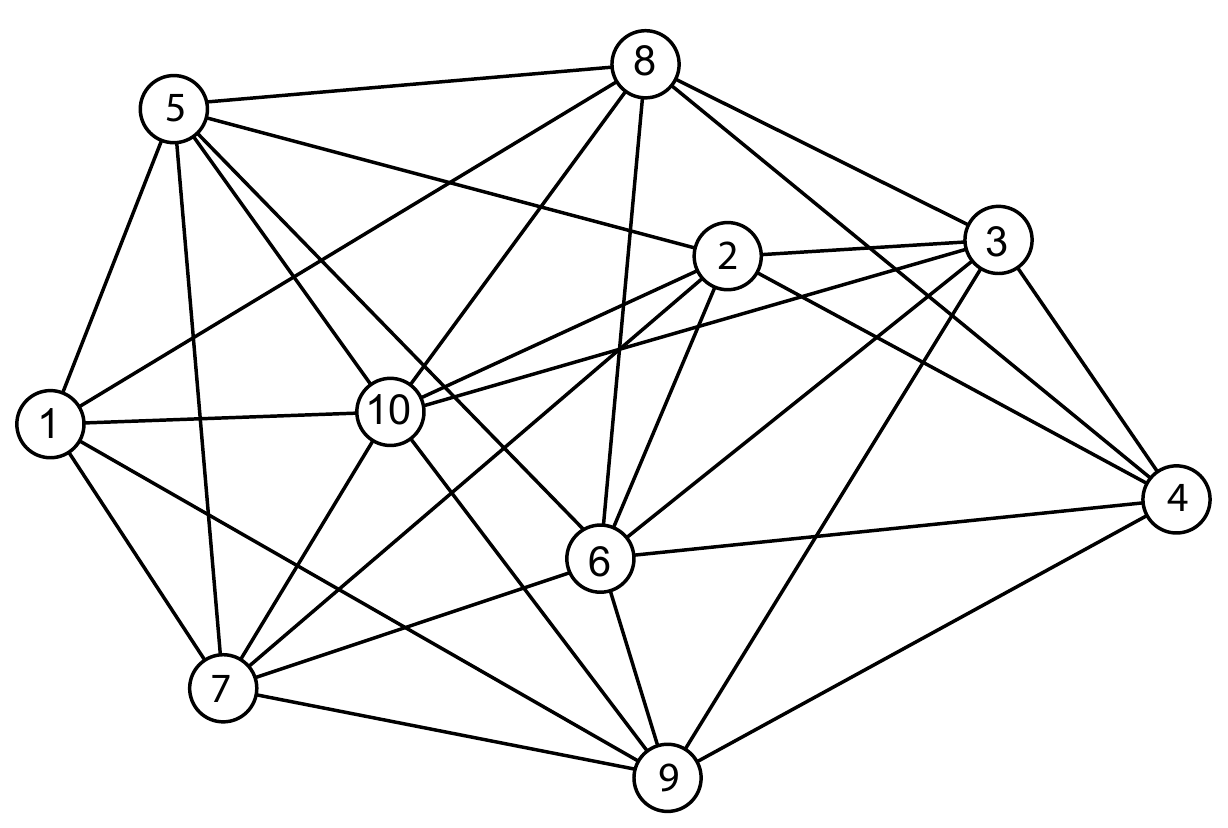}}
\end{picture}
\caption{Graph $\tth$ }
\label{mmik10}
\end{center}
\end{figure}

 \begin{figure}[htpb!]
\begin{center}
\begin{picture}(400, 140)
\put(0,0){\includegraphics[width=5.5in]{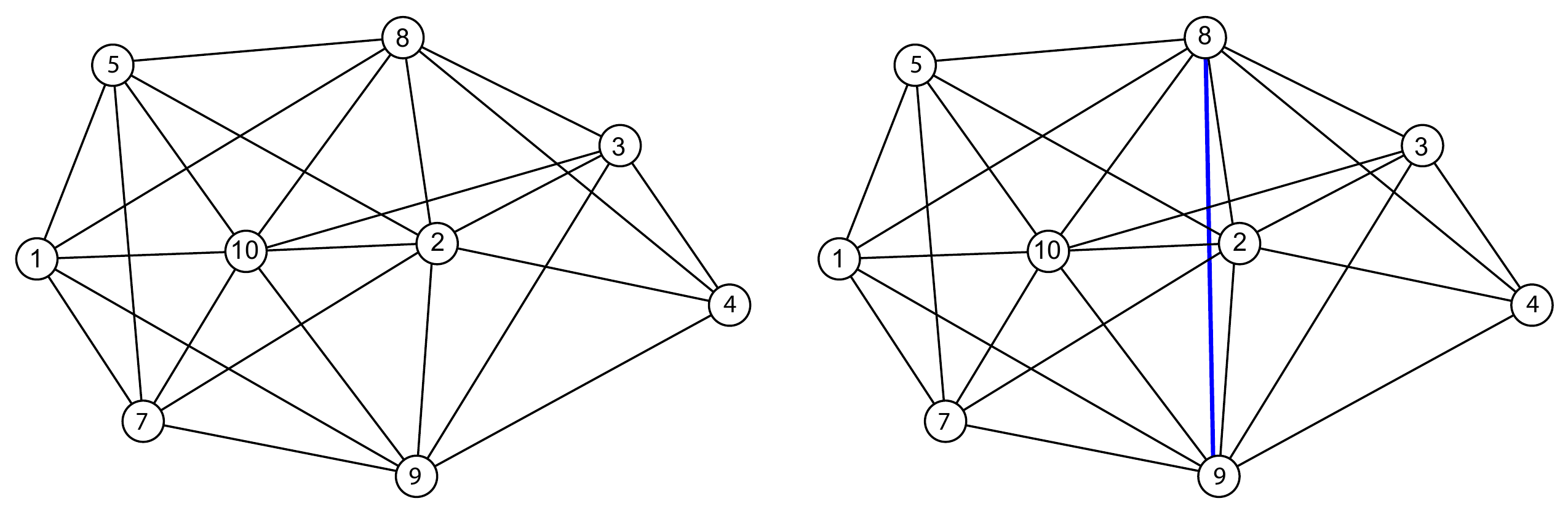}}
\end{picture}
\caption{(a) The contraction minor of $\tth$.\,\,\,\,\, (b) $H\oplus_{K_4}K_5$. }
\label{G1030ce}

\end{center}
\end{figure} 

\begin{proposition}
\label{prop:order10}
Ten is the smallest order for an IK graph which admits a nIL edge contraction minor.
\end{proposition}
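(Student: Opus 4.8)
The plan is to establish the two halves of the statement separately: that some intrinsically knotted graph of order $10$ admits a nIL edge contraction minor, and that no such graph of order at most $9$ does.

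For the first half, the graph $\tth$ of Figure~\ref{mmik10} is the witness: it is IK and has order $10$, and as explained in the Remark preceding this proposition, contracting the edge $(2,6)$ yields the graph of Figure~\ref{G1030ce}(a), which is nIL (adding the edge $(8,9)$ turns it into $H \oplus_{K_4} K_5$, a clique sum of two nIL graphs to which Lemma~14 of \cite{NPP} applies, so the graph of Figure~\ref{G1030ce}(a) is a subgraph of a nIL graph). Hence order $10$ is achieved.

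For the second half I would argue through the Colin de Verdi\`ere parameter. Recall three standard facts: $\mu$ is minor-monotone; $\mu(G') \ge \mu(G) - 1$ whenever $G'$ is obtained from $G$ by deleting or contracting a single edge; and $\mu(G) \le 4$ if and only if $G$ is nIL \cite{HLS}. Let $G$ be IK with $|V(G)| \le 9$. Then $G$ has a minor $M$ that is minor minimal IK, with $|V(M)| \le 9$. The key input is that every minor minimal IK graph on at most nine vertices has $\mu = 6$; this rests on the (known) complete list of such graphs together with a computation of $\mu$ for each. Granting it, minor-monotonicity gives $\mu(G) \ge \mu(M) = 6$, so $\mu(G/e) \ge 5$ for every edge $e$, and therefore $G/e$ is IL. Thus no IK graph of order at most $9$ has a nIL edge contraction minor, which together with the first half proves the proposition. (This conclusion for order at most $9$ also follows from the direct computer verification mentioned in the Introduction.)

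The step I expect to be the main obstacle is the highlighted input: one must know that the list of minor minimal IK graphs on at most nine vertices is complete and that each of them has $\mu = 6$ --- note the contrast with $\tth$ itself, which is MMIK of order $10$ but has $\mu = 5$. If some small MMIK graph had $\mu = 5$, this argument would break and one would have to fall back on a direct search: enumerate the maximal nIL graphs on at most eight vertices, form every graph of order at most $9$ obtained from one of them by a single vertex split, and test the resulting graphs for intrinsic knottedness.
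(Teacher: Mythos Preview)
Your overall strategy matches the paper's: reduce to the classification of MMIK graphs of order at most~$9$ via the Colin de Verdi\`ere invariant, using that an IK graph with a nIL simple minor has $\mu=5$ (the paper records this as Lemma~\ref{lem:musimple}). The first half, with $\tth$ as witness, is exactly what the paper does.

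The gap is precisely the step you flagged as the ``main obstacle,'' and it is a real one. It is \emph{not} the case that every MMIK graph on at most nine vertices is known to have $\mu=6$. By \cite{GMN} and \cite{MMR}, all MMIK graphs of order $\le 9$ lie in the $K_7$, $K_{3,3,1,1}$, or $E_9+e$ families (and hence have $\mu=6$) \emph{with one exception}: the graph $G_{9,28}$, which belongs to none of these families. So your inequality $\mu(G)\ge\mu(M)=6$ is unjustified when the MMIK minor $M$ happens to be $G_{9,28}$, and the argument as written does not close.

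The paper does not attempt to compute $\mu(G_{9,28})$. Instead it disposes of this residual case directly: if $G$ has order $9$, contains $G_{9,28}$, and $G/e$ is nIL, then deleting either endpoint of $e$ from $G$ is also nIL, and hence so is the corresponding vertex-deleted subgraph of $G_{9,28}$. But $G_{9,28}$ is built as two nonadjacent cones over $\overline{C_7}$, and up to symmetry there are only two order-$8$ vertex-deleted subgraphs; each is shown to contain a $K_6$ minor (by contracting two explicit edges), hence is IL. That extra paragraph is what your proposal is missing; your suggested fallback (enumerating maxnIL graphs of order $\le 8$ and testing vertex splits) would also work in principle, but it is heavier than the short $K_6$-minor check the paper uses.
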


We defer the proof to Section~\ref{sec:mu5IK}.
 
 
\section{$\etf$ is IK}
\label{Order 10 IK minor}

We prove $\etf$ is IK by 
showing that the graph $\tts$  in Figure~\ref{Graph10} is an IK minor of $\etf$. 
(In fact, $\tts$ is MMIK; we show this in the next section.)

 \begin{figure}[htpb!]
\begin{center}
\begin{picture}(240, 170)
\put(0,0){\includegraphics[width=3.2in]{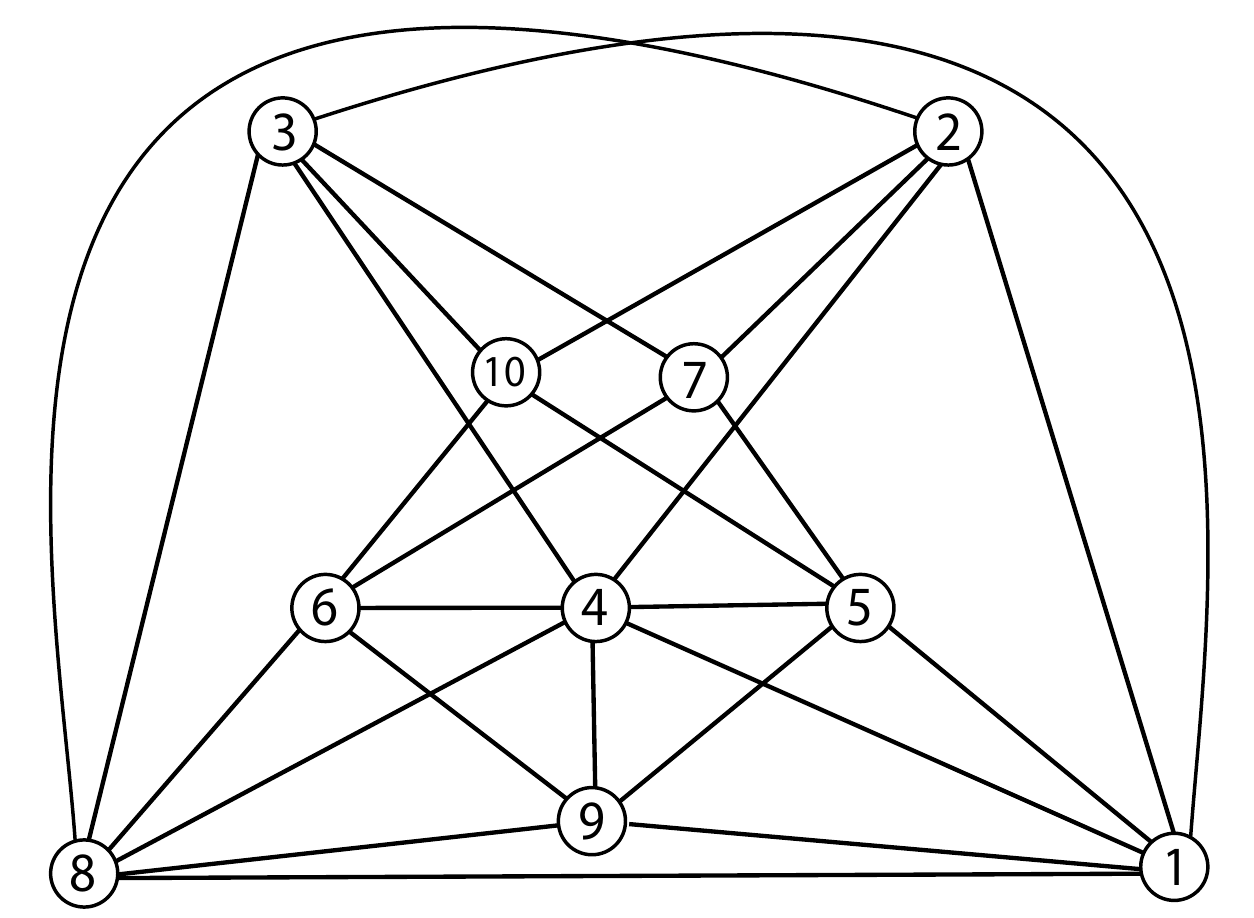}}
\end{picture}
\caption{$\tts$}
\label{Graph10}
\end{center}
\end{figure} 

The graph $\tts$ is obtained from $\etf$ by contracting the edge (2,11) and deleting the edges (2,3), (2,5), (2,6), (3,5), (3,6), (4,10),  and (5,6).

To prove $\tts$ is IK, we use the technique developed by Foisy in \cite{F2}, 
which we explain below.
The $D_4$ graph is the (multi)graph shown in Figure~\ref{fig-D4}.
A \dfn{double-linked $D_4$} is a $D_4$ graph embedded in $S^3$
such that each pair of opposite 2-cycles ($C_1 \cup C_3$, and $C_2 \cup C_4$) has odd linking number.
The following lemma was proved by Foisy~\cite{F2};
a more general version of it was proved independently 
by Taniyama and Yasuhara~\cite{TanYas}.

\begin{lemma}
\label{D4-Lemma}
Every double-linked $D_4$ contains a nontrivial knot.
\end{lemma}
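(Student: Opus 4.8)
The plan is to locate a nontrivial knot among the finitely many cycles that use exactly one edge from each of the four $2$-cycles, by means of a Conway--Gordon--Taniyama type congruence for the Arf invariant. Reading off Figure~\ref{fig-D4}, I regard $D_4$ as a $4$-cycle $v_1 v_2 v_3 v_4$ in which each edge is doubled, so that $C_1, C_2, C_3, C_4$ are the four $2$-cycles and $\{C_1, C_3\}$, $\{C_2, C_4\}$ are the two vertex-disjoint (``opposite'') pairs. Choosing one of the two edges from each $C_i$ yields a Hamiltonian $4$-cycle of $D_4$; there are $2^4 = 16$ such cycles $\gamma_1, \dots, \gamma_{16}$, and in a fixed embedding each is a knot. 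The engine of the proof is the congruence
\begin{equation}
\sum_{i=1}^{16} \mathrm{Arf}(\gamma_i) \equiv \mathrm{lk}(C_1, C_3)\cdot \mathrm{lk}(C_2, C_4) \pmod 2,
\label{eq:arfsum}
\end{equation}
which I claim holds for every spatial embedding of $D_4$.

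Granting \eqref{eq:arfsum}, the lemma is immediate. In a double-linked $D_4$ both $\mathrm{lk}(C_1,C_3)$ and $\mathrm{lk}(C_2,C_4)$ are odd, so the right-hand side of \eqref{eq:arfsum} is $1$ modulo $2$; hence $\sum_i \mathrm{Arf}(\gamma_i)$ is odd, and in particular some $\gamma_i$ has $\mathrm{Arf}(\gamma_i) = 1$. Since the unknot has vanishing Arf invariant, this $\gamma_i$ is a nontrivial knot lying in the embedded $D_4$.

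To establish \eqref{eq:arfsum} I would argue by crossing-change invariance. Any two embeddings of $D_4$ are related by ambient isotopy together with finitely many crossing changes, so it suffices to verify \eqref{eq:arfsum} for one standard planar (unknotted, unlinked) embedding and to check that both sides change by the same amount modulo $2$ under a single crossing change. The base case is trivial: in the planar embedding every $\gamma_i$ is unknotted and both linking numbers vanish, so \eqref{eq:arfsum} reads $0 \equiv 0$. For the inductive step I would use the standard fact that the oriented smoothing of a self-crossing change of a knot $K$ produces a two-component link $L_0$ with $\mathrm{Arf}(K_+) + \mathrm{Arf}(K_-) \equiv \mathrm{lk}(L_0) \pmod 2$, together with the mod-$2$ bilinearity of the linking number. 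The crossings split into types according to which $C_i$'s the two strands belong to: a crossing change between an edge of $C_1$ and an edge of $C_3$ alters $\mathrm{lk}(C_1,C_3)$ by $1$ and is a self-crossing change for exactly the four $\gamma_i$ using both of those edges; the symmetric statement holds for $C_2$--$C_4$ crossings; and crossings of an adjacent (shared-vertex) pair $C_i$--$C_{i+1}$, or self-crossings within a single $2$-cycle, leave both sides fixed modulo $2$.

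The main obstacle is precisely the bookkeeping in this inductive step. For a $C_1$--$C_3$ crossing change one must identify, for each of the four affected Hamiltonian cycles, the two loops produced by the oriented smoothing, and then verify that the resulting sum of their pairwise linking numbers collapses modulo $2$ to $\mathrm{lk}(C_2,C_4)$ — thereby matching the change in the product on the right of \eqref{eq:arfsum}. Here the orientation subtleties of traversing the $2$-cycles wash out because $+1$ and $-1$ agree mod $2$, so the four terms assemble into the full linking number $\mathrm{lk}(C_2,C_4)$ of the opposite pair. One must simultaneously confirm that the ``mixed'' (adjacent) and ``self'' crossing types contribute nothing to either side. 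Completing this case analysis yields \eqref{eq:arfsum} and hence the lemma. (Alternatively, one may simply cite the general realization theorem of Taniyama and Yasuhara~\cite{TanYas}, of which \eqref{eq:arfsum} is a special instance, and bypass the crossing-change computation.)
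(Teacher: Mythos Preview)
The paper does not actually prove this lemma; it merely attributes it to Foisy~\cite{F2} (with a more general version due to Taniyama and Yasuhara~\cite{TanYas}) and then uses it as a black box. So there is no ``paper's own proof'' to compare against beyond the citation itself.

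That said, your sketch is very much in the spirit of Foisy's original argument: he too works with the sixteen Hamiltonian $4$-cycles and proves a mod~$2$ congruence tying the sum of their Arf invariants (equivalently, $a_2$ coefficients) to the product of the two linking numbers, via crossing-change analysis. Your outline is correct, and you are right that the delicate part is the bookkeeping in the inductive step---identifying, for a $C_1$--$C_3$ crossing change, the smoothed two-component links for each of the four affected Hamiltonian cycles and checking that their linking numbers sum mod~$2$ to $\mathrm{lk}(C_2,C_4)$. You flag this honestly as the main obstacle rather than carrying it out, so the write-up is a proof \emph{plan} rather than a complete proof; but the plan is sound, and your parenthetical remark that one may instead simply invoke~\cite{TanYas} is exactly what the paper does.
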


 \begin{figure}[ht]
 \centering
 \includegraphics[width=35mm]{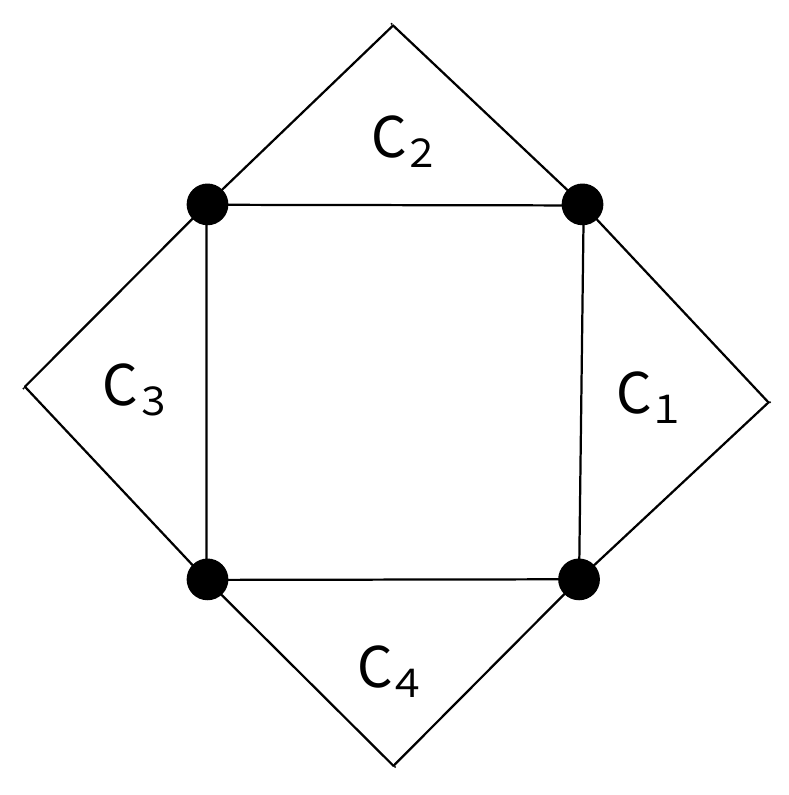}
 \caption{The $D_4$ graph.
 \label{fig-D4}
 }
\end{figure}

We will also use the following (well known and easy to prove) lemma.

\begin{lemma}
\label{lemma-split}
Suppose $\alpha$, $\beta_1$, and $\beta_2$ are simple closed curves in $S^3$ such that 
$\beta_1 \cap \beta_2$ is an arc and
$\alpha$ has odd linking number with
$(\beta_1 \cup \beta_2) \setminus \mathrm{interior}(\beta_1 \cap \beta_2)$.
Then $\alpha$ has odd linking number with $\beta_1$ or $\beta_2$. 
\end{lemma}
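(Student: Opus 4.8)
The plan is to obtain the conclusion from a single homological additivity relation for linking numbers in the complement of $\alpha$. Write $\gamma = \beta_1 \cap \beta_2$ for the common arc, and let $\gamma_i$ denote the closure of $\beta_i \setminus \gamma$, the arc of $\beta_i$ complementary to $\gamma$; then $\beta_i = \gamma \cup \gamma_i$ and the three arcs $\gamma,\gamma_1,\gamma_2$ all join the same pair of points, say $p$ and $q$. In particular $\beta_3 := (\beta_1\cup\beta_2)\setminus\mathrm{interior}(\gamma) = \gamma_1\cup\gamma_2$ is a simple closed curve, and the four curves $\alpha,\beta_1,\beta_2,\beta_3$ are pairwise disjoint (disjointness from $\alpha$ being implicit in the hypothesis, since the relevant linking numbers are assumed to be defined).

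The key step is the identity $[\beta_3] = [\beta_1] - [\beta_2]$ in $H_1(S^3\setminus\alpha;\mathbb{Z})\cong\mathbb{Z}$. To see this, orient $\gamma,\gamma_1,\gamma_2$ each from $p$ to $q$; then, for a suitable choice of orientation on each $\beta_i$, these curves represent the $1$-cycles $\beta_1 = \gamma_1-\gamma$, $\beta_2 = \gamma_2-\gamma$, and $\beta_3 = \gamma_1-\gamma_2$, so that $\beta_1-\beta_2 = \gamma_1-\gamma_2 = \beta_3$. (If one prefers to avoid the orientation bookkeeping altogether, the same computation over $\mathbb{Z}/2$ gives $[\beta_1]+[\beta_2] = [\beta_3]$ in $H_1(S^3\setminus\alpha;\mathbb{Z}/2)$, since each of $\gamma,\gamma_1,\gamma_2$ occurs twice in the chain $\beta_1+\beta_2+\beta_3$.)

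Finally, $\mathrm{lk}(\alpha,\cdot)$ is, up to sign, the canonical isomorphism $H_1(S^3\setminus\alpha;\mathbb{Z})\xrightarrow{\ \sim\ }\mathbb{Z}$, so applying it to the identity above yields $\mathrm{lk}(\alpha,\beta_3)\equiv \mathrm{lk}(\alpha,\beta_1)+\mathrm{lk}(\alpha,\beta_2)\pmod 2$. Since the left-hand side is odd by hypothesis, $\mathrm{lk}(\alpha,\beta_1)$ and $\mathrm{lk}(\alpha,\beta_2)$ have opposite parities, and in particular at least one of them is odd, which is the claim. There is no real obstacle here — the lemma is genuinely elementary — the only points requiring a little care are checking that $\beta_3$ is honestly a simple closed curve and that the $[\gamma]$ contributions cancel; alternatively, the entire argument can be run diagrammatically, by taking a generic projection and a $2$-chain bounded by $\alpha$ and counting its signed intersections with the three arcs $\gamma,\gamma_1,\gamma_2$.
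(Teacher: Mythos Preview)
Your argument is correct: the additivity $\mathrm{lk}(\alpha,\beta_3)\equiv \mathrm{lk}(\alpha,\beta_1)+\mathrm{lk}(\alpha,\beta_2)\pmod 2$ follows from the homology relation you wrote down, and the conclusion is immediate. The paper does not actually prove this lemma---it is stated as ``well known and easy to prove''---so there is nothing to compare against; your proof is the standard one and would serve perfectly well.
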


\begin{theorem}
The graph $\tts$ in Figure \ref{Graph10} is IK.
\end{theorem}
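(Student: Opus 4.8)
The plan is to apply Foisy's technique (Lemma~\ref{D4-Lemma}) directly: exhibit, for an arbitrary embedding of $\tts$ in $S^3$, a subgraph that is a subdivision of the $D_4$ graph and show that it must be double-linked, i.e. that each pair of opposite $2$-cycles has odd linking number. To set this up, I would first use the symmetry of $\tts$ (the name ``Symmetric'' and Figure~\ref{Graph10} suggest a vertex-transitive or at least highly symmetric structure) to locate a natural $D_4$-subdivision: pick a ``central'' vertex (or pair of vertices) and four cycles hanging off it that pair up into the two pinched pairs $C_1\cup C_3$ and $C_2\cup C_4$ required by the $D_4$ picture. Because $\tts$ has $26$ edges on $10$ vertices, it has enough edges to route these four cycles together with the connecting arcs.

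The core of the argument is the linking computation, and here I would lean on the fact that $\tts$ contains several copies of Petersen-family graphs (or at least $K_{3,3}$'s and $K_5$'s) as minors/subgraphs. The standard move is: for a fixed embedding $\Gamma$ of $\tts$, suppose toward a contradiction that one of the opposite pairs of $2$-cycles in our chosen $D_4$ has even linking number. Then one shows that some auxiliary subgraph obtained by adding back or rerouting edges is embedded with \emph{all} its relevant $2$-component links having even (hence, mod $2$, zero) linking number, contradicting the fact that that subgraph is IL --- more precisely, contradicting that it lies in the Petersen family, where the mod~$2$ sum of the linking numbers over the appropriate cycle pairs is forced to be odd (the Conway--Gordon / Sachs parity argument). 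Lemma~\ref{lemma-split} is the tool that lets me transfer an ``odd linking number with a pinched pair of cycles sharing an arc'' statement to ``odd linking number with one of the two cycles,'' which is exactly what is needed to convert the Petersen-family parity conclusion into the double-linked $D_4$ hypothesis on the nose.

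Concretely the steps would be: (1) fix the $D_4$-subdivision inside $\tts$ explicitly by listing which vertices and edge-paths form $C_1,\dots,C_4$ and the two connecting paths; (2) identify two subgraphs $F_1, F_2$ of $\tts$, each in the Petersen family (e.g. each a $K_{3,3}$ with a pendant path contracted, or a $K_6$-minor witness), such that $F_i$ ``controls'' the parity of the linking of the $i$-th opposite pair; (3) apply the Conway--Gordon--Sachs parity result to $F_1$ and $F_2$ in the induced embedding to get that the sum of linking numbers over disjoint cycle pairs is odd in each; (4) use Lemma~\ref{lemma-split} (possibly iterated) to pin this down to odd linking number of the specific opposite $2$-cycle pair $C_1\cup C_3$, respectively $C_2\cup C_4$; (5) conclude the $D_4$-subdivision is double-linked and invoke Lemma~\ref{D4-Lemma} to produce a nontrivial knot in $\Gamma$. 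Since $\Gamma$ was arbitrary, $\tts$ is IK.

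The main obstacle I anticipate is step (2)--(4): finding the right Petersen-family subgraphs/minors whose linking parity genuinely forces oddness of the \emph{particular} opposite pairs of $2$-cycles of the chosen $D_4$, rather than of some other cycle pair, and making sure the arc-sharing hypotheses of Lemma~\ref{lemma-split} are literally satisfied after whatever contractions are used to pass from a minor back to an honest subgraph of $\tts$. In practice this is a somewhat delicate bookkeeping exercise in which cycles share which edges; the symmetry of $\tts$ should cut the casework roughly in half, since the two opposite pairs of the $D_4$ can be treated by a single argument composed with the symmetry. If a clean Petersen-family witness is hard to isolate, the fallback is a direct mod~$2$ linking argument: assign to the embedding the vector of mod~$2$ linking numbers over all pairs of disjoint cycles and show, by a Conway--Gordon-style computation specific to $\tts$, that it cannot be the zero vector on the relevant coordinates --- but that is more computational and I would prefer the Petersen-family route if it goes through.
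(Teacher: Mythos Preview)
Your overall framework --- find a double-linked $D_4$ minor in every embedding and invoke Lemma~\ref{D4-Lemma} --- is the same as the paper's, but your implementation plan has a genuine gap at step~(1): you cannot fix a \emph{single} $D_4$-subdivision in advance and then prove it is double-linked in every embedding. The Conway--Gordon--Sachs parity argument for a Petersen-family subgraph tells you only that \emph{some} pair of disjoint cycles has odd linking number, not which pair; and Lemma~\ref{lemma-split} only converts ``$\alpha$ links $\beta_1\oplus\beta_2$'' into a disjunction ``$\alpha$ links $\beta_1$ \emph{or} $\alpha$ links $\beta_2$,'' never into a specific conclusion. So from two Petersen-family witnesses you obtain a branching tree of cases, and in different branches the cycles that are forced to link are different --- which means the $D_4$ minor that is double-linked must be chosen differently in each branch. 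Hoping to ``pin this down to odd linking number of the specific opposite $2$-cycle pair $C_1\cup C_3$'' is exactly the step that does not go through.

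The paper's proof reflects this: it uses not two but \emph{six} intrinsically linked subgraphs $A,B,C,D,E,F$ (with minors $K_{3,3,1}$, $K_{4,4}^-$, $G_7$, etc.), starts by casing on which of four symmetry-classes of cycle pairs in $A$ is linked, and then for each case branches again on a second (and sometimes a third) subgraph, producing on the order of thirty subcases. In each leaf of this case tree a \emph{different} set of edges is contracted to exhibit a double-linked $D_4$. Your estimate of the symmetry is also too optimistic: $\tts$ has six vertex orbits, not one, so symmetry reduces the casework but does not collapse it to a single argument. What you describe as ``delicate bookkeeping'' is in fact the entire content of the proof, and it cannot be circumvented by choosing the $D_4$ cleverly at the outset.
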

\begin{proof}

We shall prove that every embedding of $\tts$ 
has a double-linked $D_4$ minor.
It then follows from Lemma~\ref{D4-Lemma} that $\tts$ is IK.
For the remainder of this proof, we will say 
two disjoint simple closed curves $\alpha$ and $\beta$ in $S^3$ are \dfn{linked}, 
or $\alpha$ \dfn{links} $\beta$, 
if $\alpha \cup\beta$ has odd linking number.

In $\tts$ we select the subgraphs $A$, $B$, $C$, $D$, $E$,  and $F$
shown in Figure~\ref{ABCDE} (these are not induced subgraphs).
All these subgraphs are either in the Petersen family of graphs or have minors in this family, and are therefore intrinsically linked: 
$A$ contains a $K_{3,3,1}$ minor obtained by contracting the edge (4,6);  
$B$ is isomorphic to $K^-_{4,4}$; 
$C$ and $F$ contain $K^-_{4,4}$ minors obtained by contracting the edges $(8,9)$ and $(1,9)$, respectively; 
$D$ and $E$ contain $G_7$ minors obtained by contracting the edges $(6,7)$ and $(5,7)$, respectively.


 \begin{figure}[htpb!]
\begin{center}
\begin{picture}(380, 300)
\put(0,0){\includegraphics[width=5.5in]{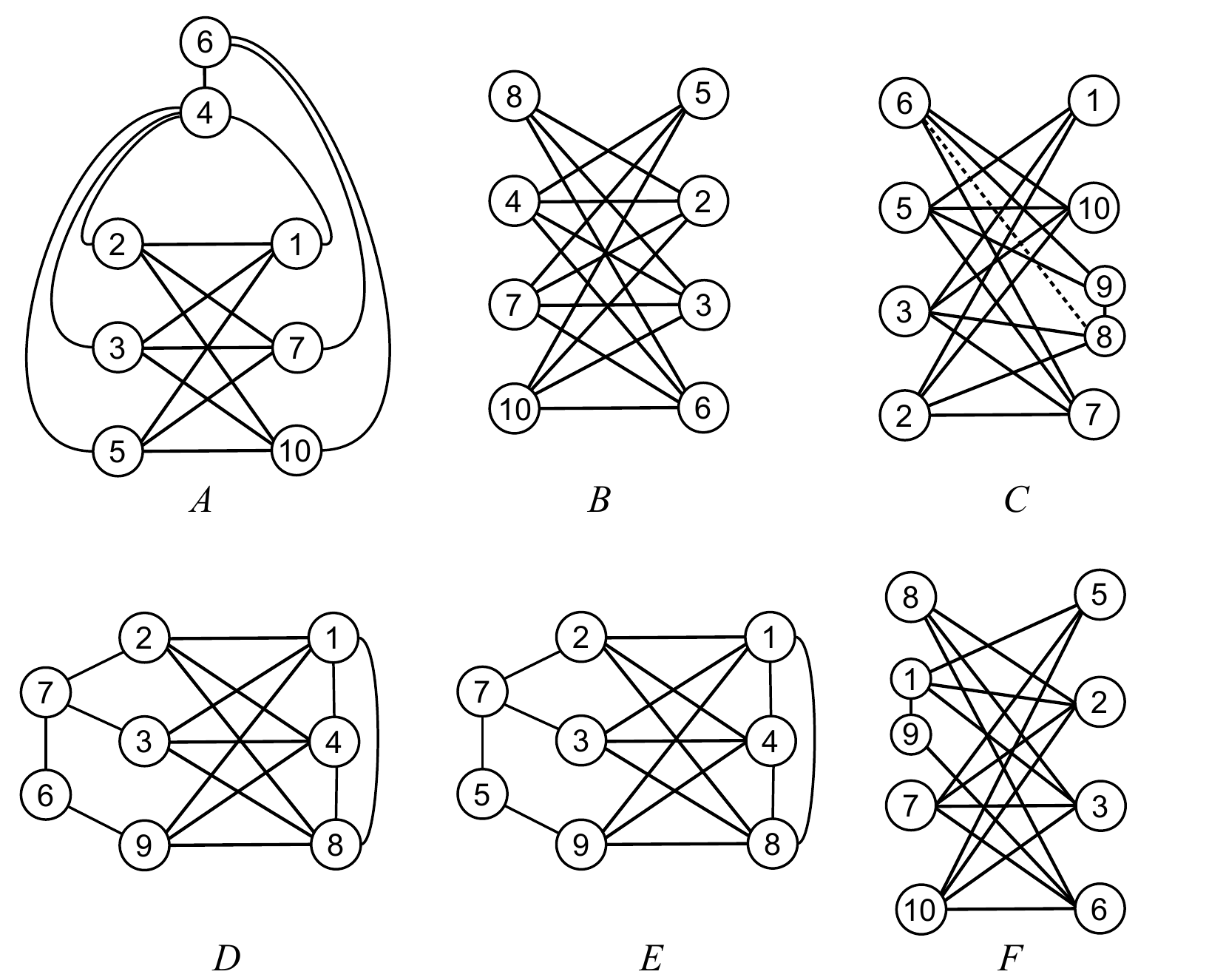}}
\end{picture}
\caption{Selected subgraphs of $\tts$.}
\label{ABCDE}
\end{center}
\end{figure} 

We organize the proof into several cases and subcases, 
according to which two cycles of each subgraph are linked.
We start with the subgraph $A$.
The vertices of $\tts$ can be partitioned into six equivalence classes up to symmetry: $\{1,8\}$, $\{2,3\}$, $\{4\}$, $\{5,6\}$, $\{7,10\}$, and $\{9\}$. 
All of these vertices except vertex 9 are in $A$.
This gives, up to symmetry, four different pairs of cycles in $A$: 
\begin{enumerate}
\item[(A1)] $(4,1,5) \cup (2, 7, 3, 10)$ 
\item[(A2)] $(4,1,2) \cup (3, 7, 5, 10)$
\item[(A3)] $(4,6,7,5) \cup (2, 1, 3, 10)$
\item[(A4)] $(4,6,7,2) \cup (3, 1, 5, 10)$
\end{enumerate}
Since $A$ is intrinsically linked,
given any embedding of $\tts$, 
we can relabel (if necessary) the vertices of $\tts$ within each equivalence class
so that at least one of these four pairs of cycles is linked.
We subdivide each of the four cases (A1)-(A4):
 (A1) is split into subcases according to which two cycles of $B$ are linked;
 (A2) according to $C$;
 (A3) according to $D$;
 and (A4) according to $B$. 
For each subcase a diagram is drawn with the non-trivial link in $A$ drawn in red. 
The two cycles in each of the subgraphs $B$ through $F$ are drawn in blue.
Each diagram contains some marked edges; contracting these marked edges in $\tts$ gives a double-linked $D_4$ minor. \\


\begin{figure}[htpb!]
\begin{center}
\begin{picture}(390, 110)
\put(0,0){\includegraphics[width=5.7in]{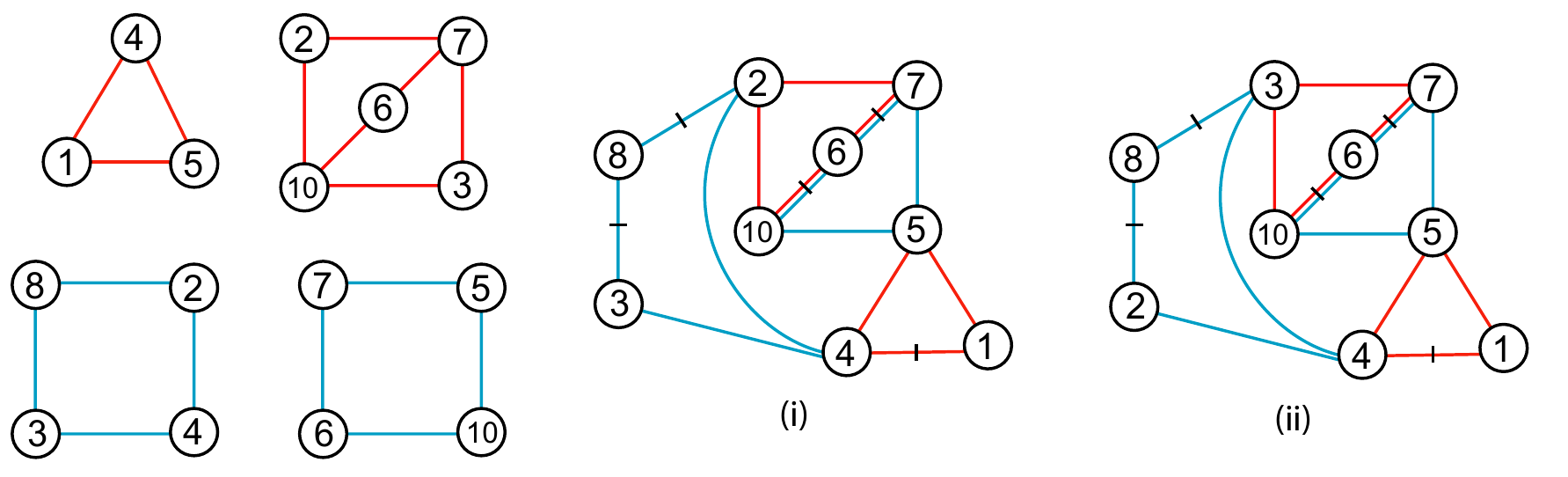}}
\end{picture}
\caption{\small Diagrams for the subcase (A1)-(B1).}
\label{a1b1}
\end{center}
\end{figure} 
\noindent \textbf{Case (A1)} Assume $(4,1,5) \cup (2, 7, 3, 10)$ is a non-trivial link of $A$.
We identify a non-trivial link in $B$ and show the existence of a double-linked $D_4$ in every subcase.  
Based on the symmetries of $\tts$, $B$ has four  different types of pairs of cycles. 
We match the link in (A1) with each of the four types of links in $B$: 
\begin{enumerate}
\item[(B1)] $(8,2,4,3) \cup (7, 5, 10, 6)$  \hspace{0.5in} (B2) $(8,2,7,3) \cup (4, 5, 10, 6)$
\item[(B3)] $(8,2,7,6) \cup (4, 5, 10, 3)$  \hspace{0.5in} (B4) $(8,2,4,6) \cup (7, 5, 10, 3)$
\end{enumerate}
\noindent \textbf{Subcase (A1)-(B1).} 
From this point forward, we abbreviate ``the cycles $X$ and $Y$ are linked'' 
as just ``$X \cup Y$''.
Assume  $(8,2,4,3) \cup (7, 5, 10, 6)$. 
Since $(4,1,5) \cup (2, 7, 3, 10)$,
by Lemma~\ref{lemma-split} we have either  (i)  $(4,1,5) \cup (2, 7, 6, 10)$ or  (ii) $(4,1,5) \cup (3, 7, 6, 10)$. 
See Figure \ref{a1b1}.


\noindent \textbf{Subcase (A1)-(B2)}. Assume  $(8,2,7,3) \cup (4, 5, 10, 6)$. See Figure \ref{a1b234} (left). \\
\noindent \textbf{Subcase (A1)-(B3)}. Assume  $(8,2,7,6) \cup (4, 5, 10, 3)$. See Figure \ref{a1b234} (center).\\ 
\noindent \textbf{Subcase (A1)-(B4)}. Assume  $(8,2,4,6) \cup (7, 5, 10, 3)$. See Figure \ref{a1b234} (right). \\


\begin{figure}[htpb!]
\begin{center}
\begin{picture}(450, 83)
\put(0,0){\includegraphics[width=6.5in]{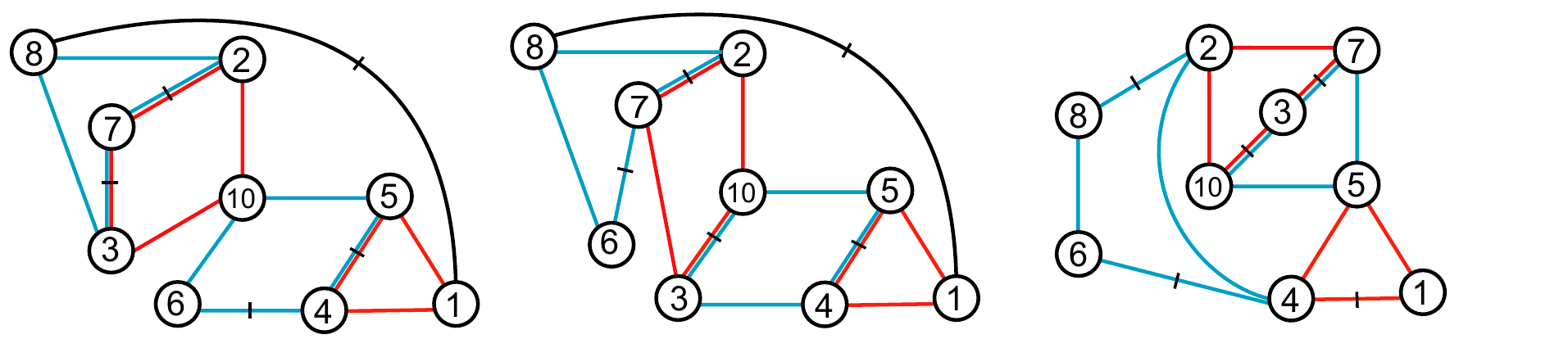}}
\end{picture}
\caption{\small Diagrams for the subcases (A1)-(B2) (left), (A1)-(B3) (center), and (A1)-(B4) (right).}
\label{a1b234}
\end{center}
\end{figure} 


\noindent \textbf{Case (A2)} Assume $(4,1,2) \cup (3, 7, 5, 10)$ is a non-trivial link of $A$.
We identify a non-trivial link in  $C$ and show the existence of a double-linked $D_4$. 
We note that vertices 8 and 9 and the edge between them act as one vertex of the $K^-_{4,4}$.
Based on the symmetries of $G$,  $C$ has four different types of pairs of cycles. 
Since in the (A2) link of $A$ vertices 2 and 3 are distinguished, they need also be distinguished within the linked cycles of $C$.
We match the link in (A2) with each link of $C$: 
\begin{enumerate}
\item[(C1)] $(6,7,2,10) \cup (1, 5, 9, 8, 3)$ \hspace{0.65in} (C4)  $(6,7,2,8,9) \cup (1, 3, 10, 5)$
\item[(C2)] $(6,7,3,10) \cup (1, 5, 9, 8, 2)$ \hspace{.65in}  (C5)  $(6,7,3,8,9) \cup (1, 2, 10, 5)$
\item[(C3)]  $(6,7,5,10) \cup (1, 2, 8, 3)$   \hspace{0.81in} (C6) $(6,7,5,9) \cup (1, 2, 10, 3)$
\end{enumerate}

\noindent \textbf{Subcase (A2)-(C1)}. Assume $(6,7,2,10) \cup (1, 5, 9, 8, 3)$. 
Since $(4,1,2) \cup (3, 7, 5, 10)$, by Lemma~\ref{lemma-split} we have either (i)  $(4,1,2) \cup (3, 7, 6, 10)$ or  (ii) $(4,1,2) \cup (5, 7, 6, 10)$.
See Figure \ref{a2c1}.


\begin{figure}[htpb!]
\begin{center}
\begin{picture}(400, 130)
\put(0,0){\includegraphics[width=5.6in]{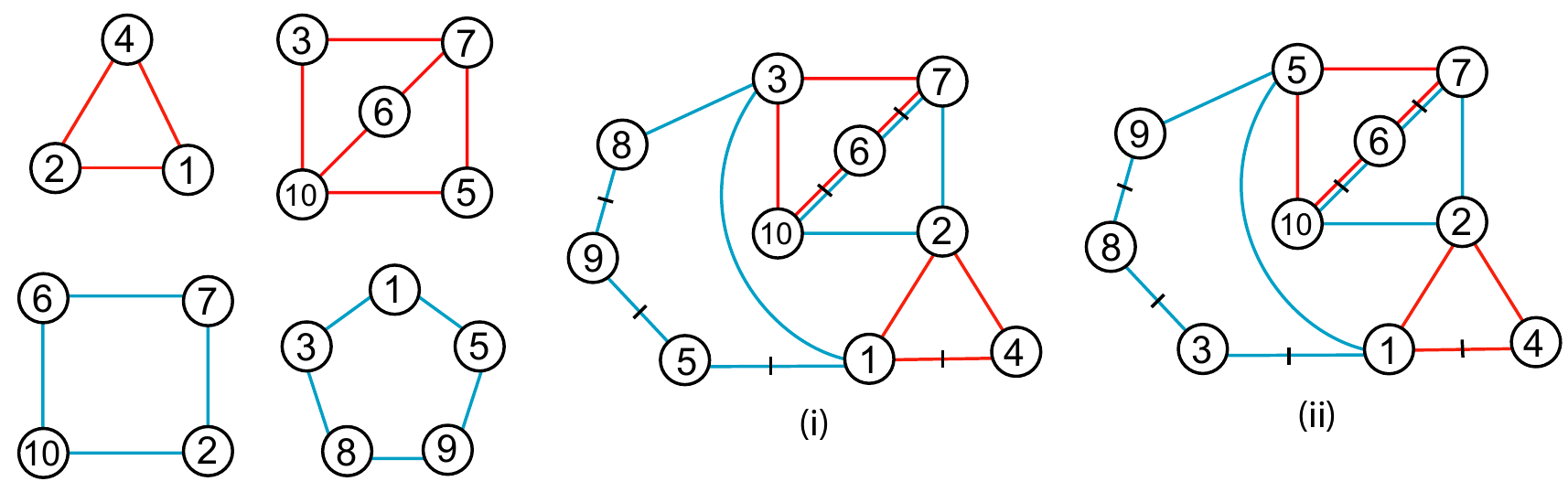}}
\end{picture}
\caption{\small Diagrams for the subcase (A2)-(C1).}
\label{a2c1}
\end{center}
\end{figure} 

\noindent \textbf{Subcase (A2)-(C2)}. Assume $(6,7,3,10) \cup (1, 5, 9, 8, 2)$. 
See Figure \ref{a2c1'} (left). 
\noindent \textbf{Subcase (A2)-(C3)}. Assume $(6,7,5,10) \cup (1, 2, 8, 3)$. 
See Figure \ref{a2c1'} (center).
\noindent \textbf{Subcase (A2)-(C4)}. Assume $(6,7,2,8,9) \cup (1, 3, 10, 5)$. 
See Figure \ref{a2c1'} (right).
\noindent \textbf{Subcase (A2)-(C5)}. Assume $(6,7,3,8,9) \cup (1, 2, 10, 5)$. 
See Figure \ref{a2c3} (left).
\noindent \textbf{Subcase (A2)-(C6)}. Assume $(6,7,3,8,9) \cup (1, 2, 10, 5)$. 
See Figure \ref{a2c3} (right).\\


\begin{figure}[htpb!]
\begin{center}
\begin{picture}(400, 105)
\put(0,0){\includegraphics[width=6in]{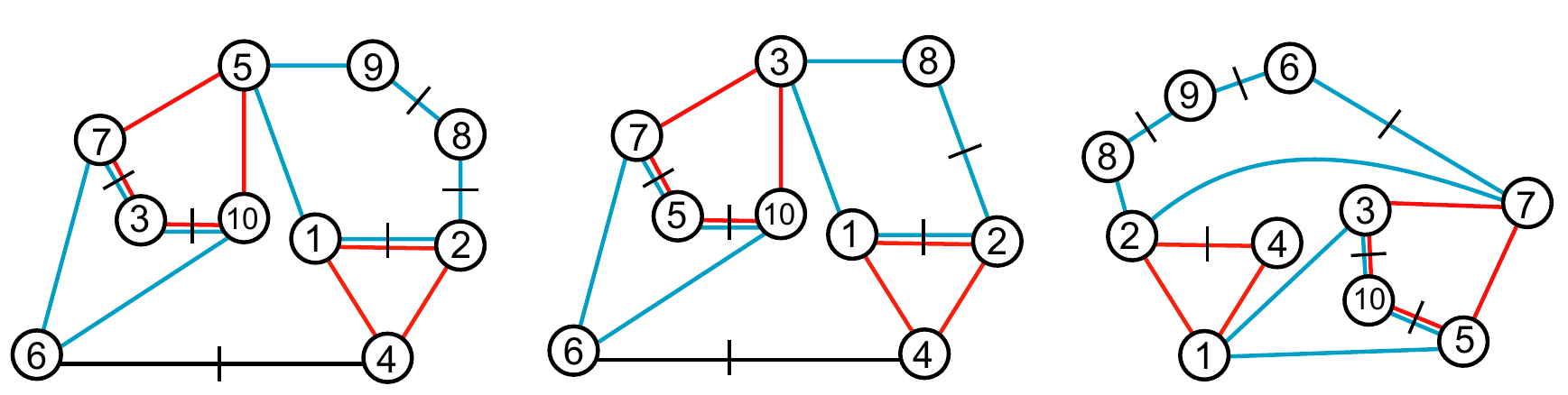}}
\end{picture}
\caption{\small Diagrams for the subcases (A2)-(C2) (left), (A2)-(C3) (center), and (A2)-(C4) (right).}
\label{a2c1'}
\end{center}
\end{figure} 


\begin{figure}[htpb!]
\begin{center}
\begin{picture}(300, 85)
\put(0,0){\includegraphics[width=4.5in]{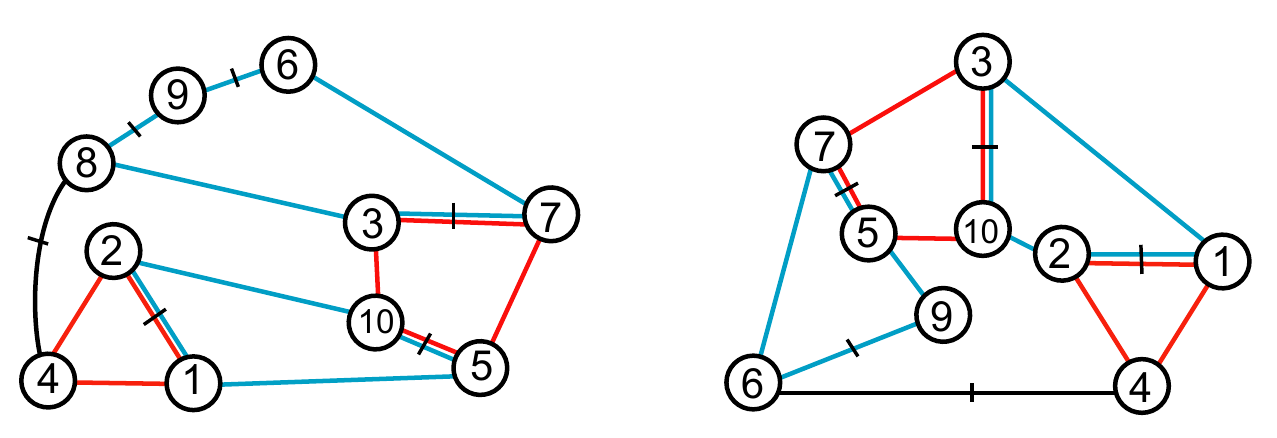}}
\end{picture}
\caption{\small Diagrams for the subcases (A2)-(C5) (left), (A2)-(C6) (right)}
\label{a2c3}
\end{center}
\end{figure}


\textbf{Case (A3)} Assume $(4,6,7,5) \cup (2, 1, 3, 10)$ is a non-trivial link of $A$.
We identify a non-trivial link in  $D$ and show the existence of a double-linked $D_4$ for all cases except one.
We then identify a non-trivial link in  $F$ and show the existence of a double-linked $D_4$ for all cases except one.
If both exceptional cases occur at the same time, the existence of a double-linked $D_4$ is shown. 

We note that  if the edge $(6,7)$ is contracted in the graph $D$, a $G_7$ graph is obtained.
Based on the symmetries of $G$,  $D$ has four different types of pairs of cycles. 
Since the (A3) link  of $A$ contains vertex 1 but does not contain vertex 8,  vertices 1 and 8 need also be distinguished within the linked cycles of $D$.
We match the link in (A3) with each link type of $D$: 
\begin{enumerate}
\item[(D1)] $(7,2,4, 3) \cup (1, 8, 9)$ \hspace{1in} (D4) $(7,2,1,9,6) \cup (4, 3, 8)$
\item[(D2)] $(7,2,1,3) \cup (4, 8, 9)$  \hspace{1in} (D5) $(7,2,8,9,6) \cup (4, 3, 1)$
\item[(D3)] $(7,2,8,3) \cup (4, 1, 9)$  \hspace{1in} (D6)*$(7,2,4,9,6) \cup (1, 3, 8)$ 
\end{enumerate}

\noindent \textbf{Subcase (A3)-(D1)}. Assume $(7,2,4, 3) \cup (1, 8, 9)$. 
Then  (i)  $(7,6,4,2) \cup (1, 8, 9)$ or  (ii) $(7,6,4,3) \cup (1, 8, 9)$.
See Figure \ref{a3d1}.



\begin{figure}[htpb!]
\begin{center}
\begin{picture}(420, 130)
\put(0,0){\includegraphics[width=5.9in]{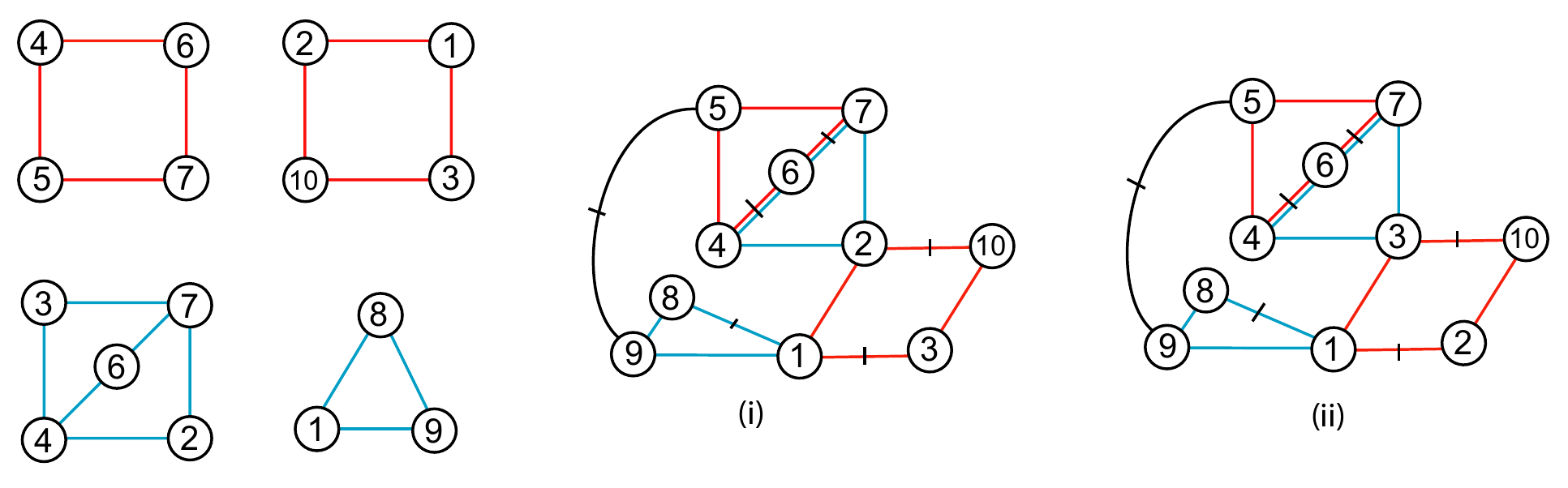}}
\end{picture}
\caption{\small Diagrams for the subcase (A3)-(D1).}
\label{a3d1}
\end{center}
\end{figure} 

\noindent \textbf{Subcase (A3)-(D2)}. Assume $(7,2, 1, 3) \cup (4, 8, 9)$.
Then  (i)  $(7,2,1,5) \cup (4, 8, 9)$ or  (ii) $(7,3,1,5) \cup (4, 8, 9)$.
See Figure \ref{a3d2}.



\begin{figure}[htpb!]
\begin{center}
\begin{picture}(400, 120)
\put(0,0){\includegraphics[width=5.6in]{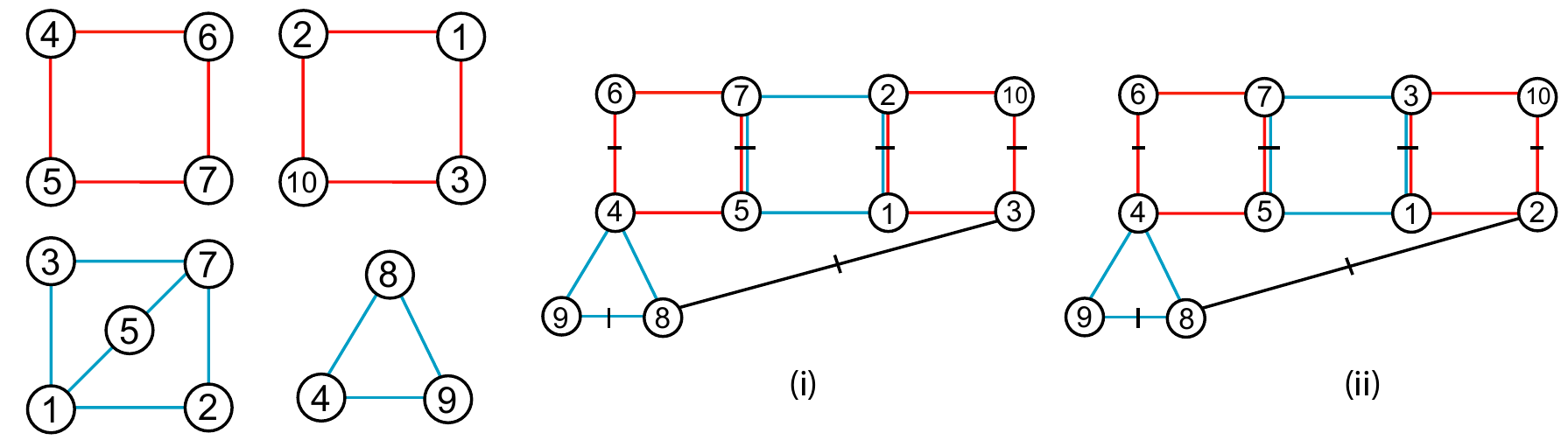}}
\end{picture}
\caption{\small Diagrams for the subcase (A3)-(D2).}
\label{a3d2}
\end{center}
\end{figure} 

\noindent \textbf{Subcase (A3)-(D3)}. Assume  $(7,2,8,3) \cup (4, 1, 9)$. 
Then  (i)  $(7,2,10,3) \cup (4, 1, 9)$ or  (ii) $(8,2,10,3) \cup (4, 1, 9)$.
See Figure \ref{a3d2'}.



\begin{figure}[htpb!]
\begin{center}
\begin{picture}(390, 130)
\put(0,0){\includegraphics[width=5.6in]{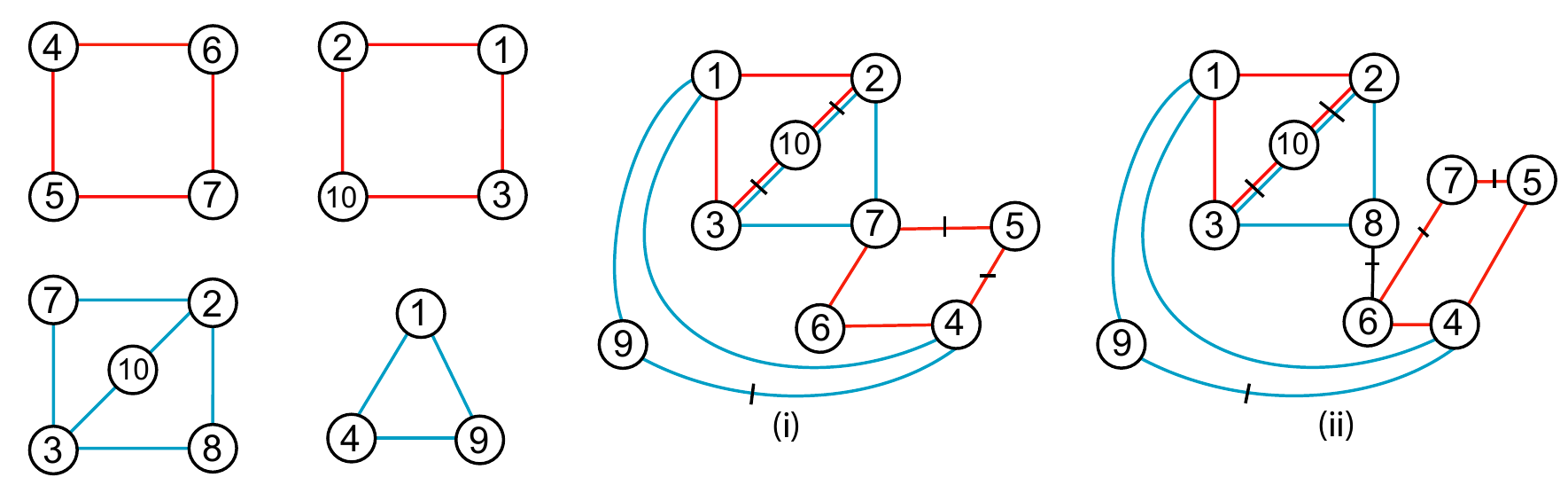}}
\end{picture}
\caption{\small Diagrams for the subcase (A3)-(D3).}
\label{a3d2'}
\end{center}
\end{figure} 

\noindent \textbf{Subcase (A3)-(D4)}. Assume $(7,2,1,9,6) \cup (4, 3, 8)$. 
See Figure \ref{a3d3} (left).



\begin{figure}[htpb!]
\begin{center}
\begin{picture}(340, 80)
\put(0,0){\includegraphics[width=4.5in]{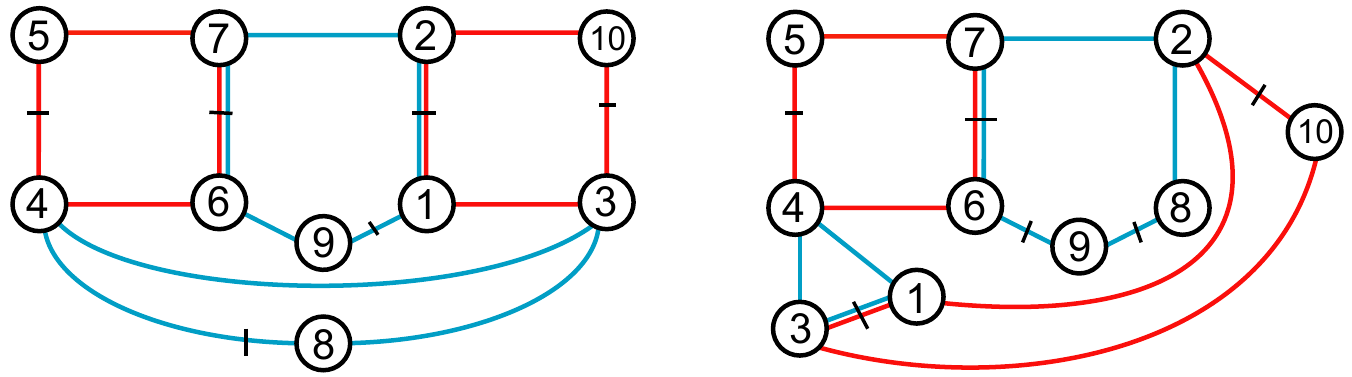}}
\end{picture}
\caption{\small Diagrams for the subcases (A3)-(D4) (left) and (A3)-(D5) (right).}
\label{a3d3}
\end{center}
\end{figure}

\noindent \textbf{Subcase (A3)-(D5)}. Assume $(7,2,8,9,6) \cup (4, 3, 1)$. 
See Figure \ref{a3d3} (right).

If none of the five D-subcases solved above occurs, then there exists a non-trivial link (D6) $(7,2,4,9,6) \cup (1, 3, 8)$.\\

We now match the link in (A3) with each link type of $F$: 
\begin{enumerate}
\item[(F1)] $(5,7,2,10) \cup (3, 1, 9, 6, 8)$ \hspace{0.44in}  (F4) $(5,7,6,10) \cup (2, 1, 3, 8)$
\item[(F2)] $(5,7,2,1) \cup (3, 10, 6, 8)$    \hspace{0.6in}  (F5) $(5,7,6,9,1) \cup (2, 10, 3, 8)$ 
\item[(F3)] $(5,10,2,19) \cup (3, 7, 6, 8)$  \hspace{0.51in}  (F6)*$(5,10,6,9,1) \cup (2, 7, 3, 8)$ 

\end{enumerate}

\noindent \textbf{Subcase (A3)-(F1)}. Assume $(5,7,2,10) \cup (3, 1, 9, 6, 8)$.
See Figure \ref{a3f1} (left).\\
\noindent \textbf{Subcase (A3)-(F2)}. Assume $(5,7,2,1) \cup (3, 10, 6, 8)$.
See Figure \ref{a3f1} (center).\\
\noindent \textbf{Subcase (A3)-(F3)}. Assume $(5,10,2,19) \cup (3, 7, 6, 8)$.
SeeFigure \ref{a3f1} (right).


\begin{figure}[htpb!]
\begin{center}
\begin{picture}(480, 90)
\put(-20,0){\includegraphics[width=6.5in]{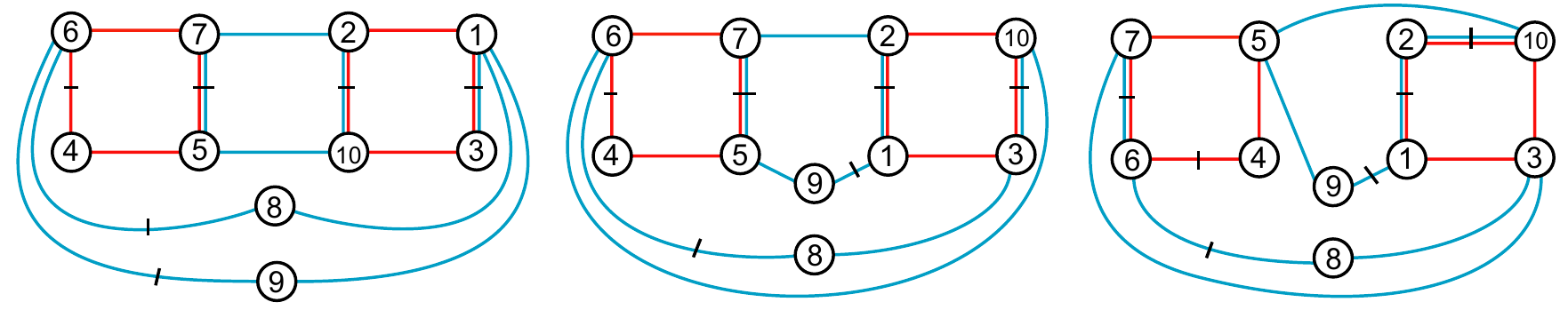}}
\end{picture}
\caption{\small Diagrams for the subcases (A3)-(F1) (left), (A3)-(F2) (center), and (A3)-(F3) (right).}
\label{a3f1}
\end{center}
\end{figure} 

\noindent \textbf{Subcase (A3)-(F4)}. Assume $(5,7,6,10) \cup (2, 1, 3, 8)$.
See Figure \ref{a3f45} (left).
\noindent \textbf{Subcase (A3)-(F5)}. Assume $(5,7,6,9,1) \cup (2, 10, 3, 8)$.
See Figure \ref{a3f45} (center).


\begin{figure}[htpb!]
\begin{center}
\begin{picture}(400, 100)
\put(-20,0){\includegraphics[width=6in]{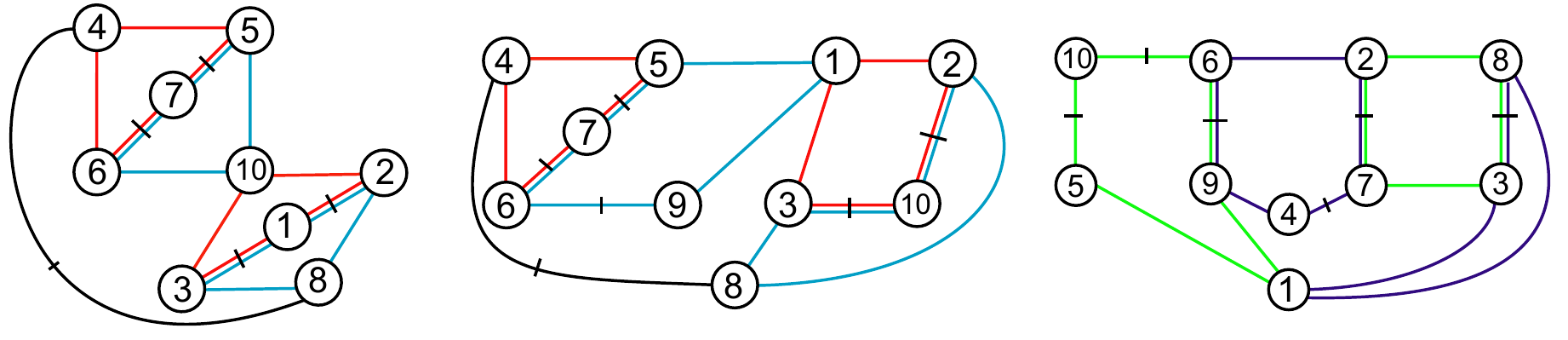}}
\end{picture}
\caption{\small Diagrams for the subcases (A3)-(F4) (left), (A3)-(F5) (center), and  (D6)-(F6) (right).}
\label{a3f45}
\end{center}
\end{figure}

If none of the five F-subcases solved above occurs, then we have (F6) $(5,10,6,9,1) \cup (2, 7, 3, 8)$.
This coupled with the remaining (D6) subcase gives:\\

\noindent \textbf{Subcase (D6)-(F6)}. Assume $(7,2,4,9,6) \cup (1, 3, 8)$ and  $(5,10,6,9,1) \cup (2, 7, 3, 8)$.
See Figure \ref{a3f45} (right).


\textbf{Case (A4)}  Assume $(4,6,7,2) \cup (3, 1, 5, 10)$ is a non-trivial link. 
We look at possible non-trivial links in the graph $B$. 
Based on the symmetries of $\tts$,  $B$ has four  different types of pairs of cycles. 
Since vertices 2 and 3 and vertices 7 and 10, respectively, are distinguished in the link A4, they need to be distinguished within the cycles of B. 
We match the link in (A4) with each link in $B$. 
There is one exceptional case which cannot be solved this way.
Then  we  look at possible non-trivial links in the graph $E$ and we match the link in (A4) with each link in $E$.
There are two  exceptional cases which cannot be solved this way.
We match the two pairs of exceptional cases to complete the proof.

\begin{enumerate}
\item[(B1)] $(8,2,4,3) \cup (7, 5, 10, 6)$     \hspace{0.6in}  (B6) $(8,3,10,6) \cup (4, 5, 7, 2)$ 
\item[(B2)] $(8,2,7,3) \cup (4, 5, 10, 6)$     \hspace{0.6in} (B7)*$(8,2,10,6) \cup (4, 5, 7, 3)$
\item[(B3)] $(8,2,10,3) \cup (4, 5, 7, 6)$     \hspace{0.6in} (B8) $(8,2,4,6) \cup (7, 5, 10, 3)$
\item[(B4)] $(8,2,7,6) \cup (4, 5, 10, 3)$   \hspace{0.6in} (B9) $(8,3,4,6) \cup (7, 5, 10, 2)$
\item[(B5)] $(8,3,7,6) \cup (4, 5, 10, 2)$ 
\end{enumerate}

\noindent \textbf{Subcase (A4)-(B1)}. Assume $(8,2,4,3) \cup (7, 5, 10, 6)$.
See Figure \ref{a4b1} (left).\\
\noindent \textbf{Subcase (A4)-(B2)}. Assume $(8,2,7,3) \cup (4, 5, 10, 6)$.
See Figure \ref{a4b1} (center).\\
\noindent \textbf{Subcase (A4)-(B3)}. Assume  $(8,2,10,3) \cup (4, 5, 7, 6)$ .
See Figure \ref{a4b1} (right).


\begin{figure}[htpb!]
\begin{center}
\begin{picture}(480, 95)
\put(-20,0){\includegraphics[width=6.5in]{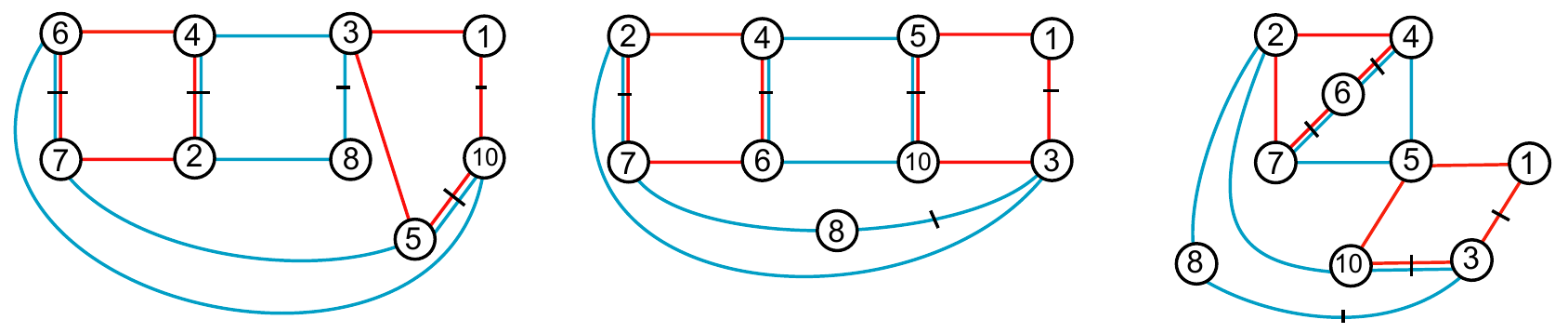}}
\end{picture}
\caption{\small Diagrams for the subcases (A4)-(B1) (left), (A4)-(B2) (center), and  (A4)-(B3) (right).}
\label{a4b1}
\end{center}
\end{figure}

\noindent \textbf{Subcase (A4)-(B4)}. Assume $(8,2,7,6) \cup (4, 5, 10, 3)$.
See Figure \ref{a4b3} (left).\\
\noindent \textbf{Subcase (A4)-(B5)}. Assume $(8,3,7,6) \cup (4, 5, 10, 2)$.
See Figure  \ref{a4b3} (center).\\
\noindent \textbf{Subcase (A4)-(B6)}. Assume $(8,3,10,6) \cup (4, 5, 7, 2)$.
See Figure \ref{a4b3} (right).


\begin{figure}[htpb!]
\begin{center}
\begin{picture}(450, 115)
\put(20,0){\includegraphics[width=5.6in]{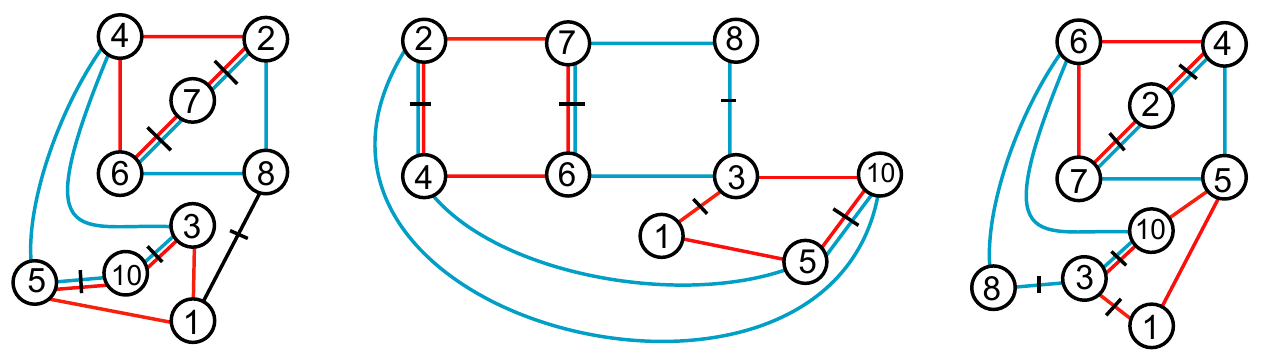}}
\end{picture}
\caption{\small Diagrams for the subcases (A4)-(B4) (left), (A4)-(B5) (center), and  (A4)-(B6) (right).}
\label{a4b3}
\end{center}
\end{figure} 

\noindent \textbf{Subcase (A4)-(B8)}. Assume  $(8,2,4,6) \cup (7, 5, 10, 3)$. See Figure \ref{a4b3} (left).
\noindent \textbf{Subcase (A4)-(B9)}. Assume $(8,3,4,6) \cup (7, 5, 10, 2)$  .
See Figure \ref{a4b3} (right).


\begin{figure}[htpb!]
\begin{center}
\begin{picture}(430, 90)
\put(0,0){\includegraphics[width=6in]{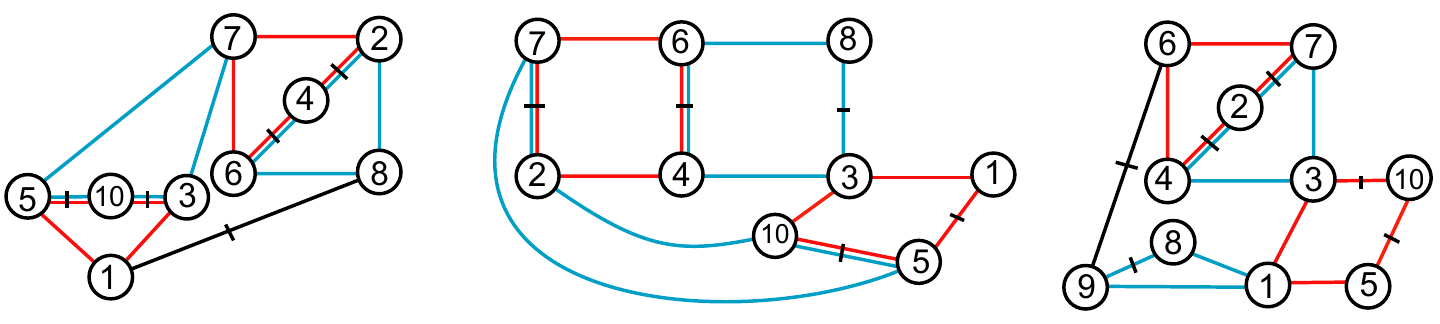}}
\end{picture}
\caption{\small Diagrams for the subcases (A4)-(B8) (left), (A4)-(B9) (center), and  (A4)-(E1) (right).}
\label{a4b4}
\end{center}
\end{figure}

We  look at possible non-trivial links in the graph $E$ and we match the link in (A4) with each link in $E$.

\begin{enumerate}
\item[(E1)] $(7,2,4,3) \cup (1,9,8)$ \hspace{1in} (E6) $(7,2,1,9,5) \cup (4,3,8)$ 
\item[(E2)] $(7,2,1,3) \cup (4, 9,8) $  \hspace{1in} (E7) $(7,2,8,9,5) \cup (4,3,1)$  
\item[(E3)] $(7,2,8,3) \cup (4, 9, 1) $  \hspace{1in} (E8)* $(7,3,8,9,5) \cup (4,2,1)$ 
\item[(E4)] $(7,2,4,9,5) \cup (3, 1, 8)$  \hspace{0.83in} (E9)* $(7,3,1,9,5) \cup (4,2,8)$ 
\item[(E5)] $(7,3,4,9,5) \cup (2, 1, 8)$
\end{enumerate}

\noindent \textbf{Subcase (A4)-(E1)}. Assume  $(7,2,4,3) \cup (1,9,8)$.
See Figure \ref{a4b3} (right).\\
\noindent \textbf{Subcase (A4)-(E2)}. Assume $(7,2,1,3) \cup (4, 9,8) $.
See Figure \ref{a4e2} (left).\\
\noindent \textbf{Subcase (A4)-(E3)}. Assume $(7,2,8,3) \cup (4, 9, 1) $ .
See Figure \ref{a4e2} (center).\\
\noindent \textbf{Subcase (A4)-(E4)}. Assume $(7,2,4,9,5) \cup (3, 1, 8)$.
See Figure \ref{a4e2} (right).


\begin{figure}[htpb!]
\begin{center}
\begin{picture}(420, 90)
\put(-20,0){\includegraphics[width=6.5in]{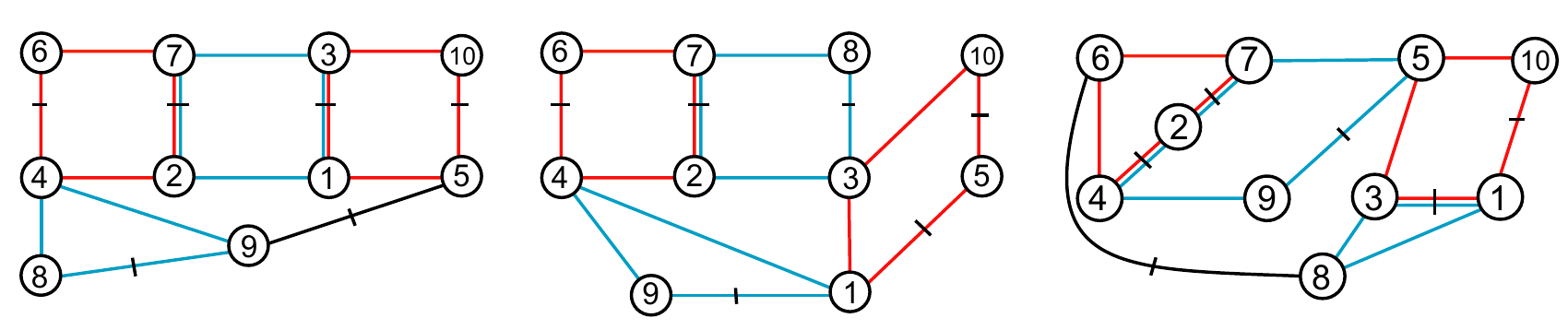}}
\end{picture}
\caption{\small Diagrams for the subcases (A4)-(E2) (left), (A4)-(E3) (center), and  (A4)-(E4) (right).}
\label{a4e2}
\end{center}
\end{figure}

\noindent \textbf{Subcase (A4)-(E5)}. Assume  $(7,3,4,9,5) \cup (2, 1, 8)$.
Then  (i)  $(7,5,10,3) \cup (2, 1, 8)$ or  (ii) $(5,10, 3,4,9) \cup (2, 1, 8)$.
See Figure \ref{a4e3p}. 


\begin{figure}[htpb!]
\begin{center}
\begin{picture}(450, 100)
\put(20,0){\includegraphics[width=5.8in]{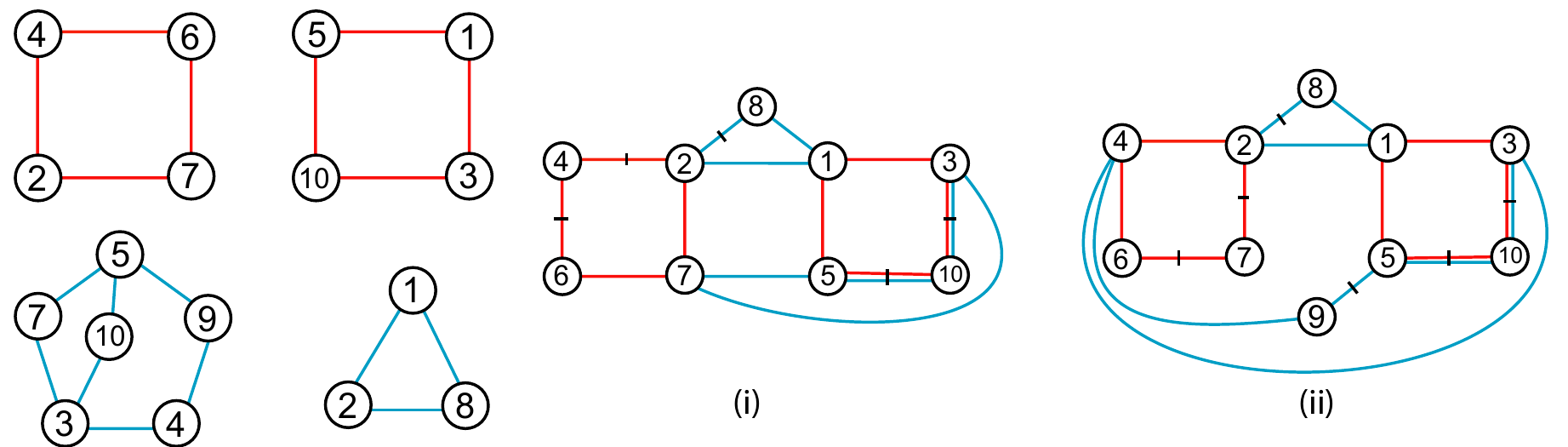}}
\end{picture}
\caption{\small Diagrams for the subcase (A4)-(E5)}
\label{a4e3p}
\end{center}
\end{figure}


\noindent \textbf{Subcase (A4)-(E6)}. Assume $(7,2,1,9,5) \cup (4,3,8)$.
Then  (i)  $(5,7,6,9) \cup (4, 3, 8)$ or  (ii) $(7,6, 9,1,2) \cup (4, 3, 8)$.
See Figure \ref{a4e4}. 
\begin{figure}[htpb!]
\begin{center}
\begin{picture}(420, 110)
\put(20,0){\includegraphics[width=5.5in]{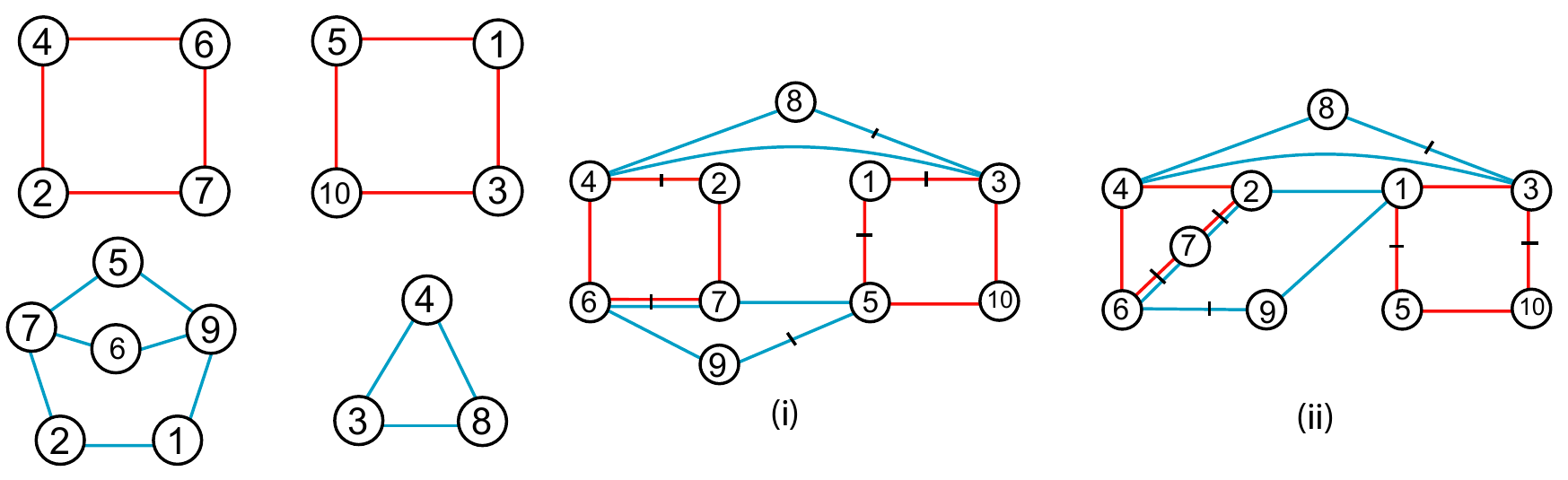}}
\end{picture}
\caption{\small Diagrams for the subcase (A4)-(E6).}
\label{a4e4}
\end{center}
\end{figure}

\noindent \textbf{Subcase (A4)-(E7)}. Assume   $(7,2,8,9,5) \cup (4,3,1)$ .
See Figure \ref{a4e4p} (left).\\
\noindent \textbf{Subcase (B7)-(E8)}. Assume  $(8,2,10,6) \cup (4, 5, 7, 3)$ and  $(7,3,8,9,5) \cup (4,2,1)$. 
See Figure \ref{a4e4p} (center).\\
\noindent \textbf{Subcase (B7)-(E9)}. Assume  $(8,2,10,6) \cup (4, 5, 7, 3)$ and $(7,3,1,9,5) \cup (4,2,8)$.
See Figure \ref{a4e4p} (right).


\begin{figure}[htpb!]
\begin{center}
\begin{picture}(440, 90)
\put(0,0){\includegraphics[width=6.1in]{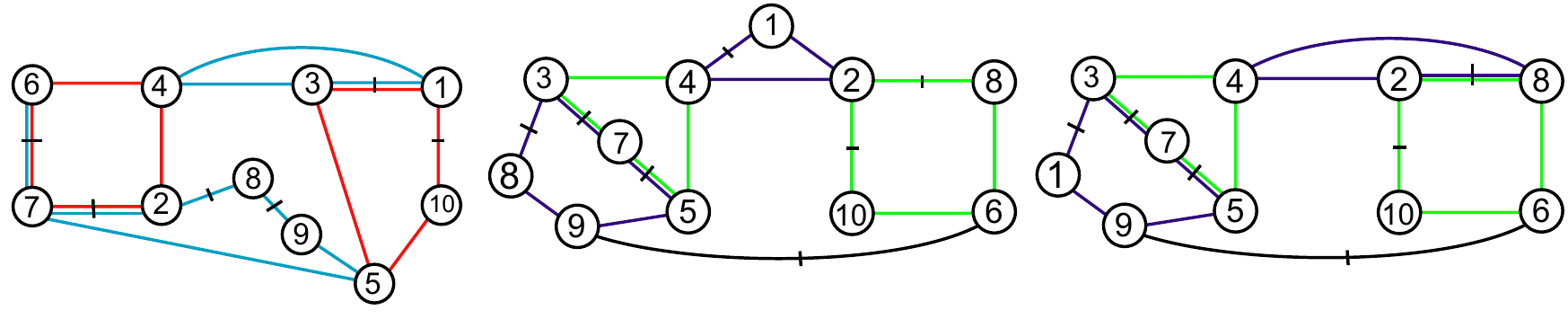}}
\end{picture}
\caption{\small Diagrams for the subcases (A4)-(E7) (left), (B7)-(E8) (center), and  (B7)-(E9) (right).}
\label{a4e4p}
\end{center}
\end{figure} 

\end{proof}


\section{$\tts$ is MMIK}

In this section we prove $\tts$ is MMIK by showing 
every simple minor of it is nIK.
The graph $\tts$ has ten vertices labeled 1, 2, ..., 10. 
Due to the symmetries of the graph, the vertices can be partitioned into six equivalence classes: $\{1,8\}$, $\{2,3\}$, $\{4\}$, $\{5,6\}$, $\{7,10\}$, and $\{9\}$. 
Up to symmetry,  $\tts$ has eleven types of edges.
Representatives for each possible type of edge are  listed in the first column of Table \ref{2Aoperations}.
For each such edge type, we constructed two graphs, one by deleting the edge, and one by contracting the edge.
The graph obtained by deleting the edge is 2-apex, since the removal of the two vertices listed in the second column gives a planar graph.
There is one exception, the graph obtained by deleting the edge $(4,5)$ is not 2-apex.
This graph is shown to be nIK in the next paragraph.
The graph obtained by contracting the edge listed in the first column is 2-apex, since the removal of the two vertices listed in the third column gives a planar graph.
When contracting an edge $e$, the new vertex inherits the smaller label among the endpoints of $e$, and all vertices not incident to $e$ maintain their labels.

\begin{table}[h]
\centering
\label{2Aoperations}
\begin{tabular}{|c|c|c|c|}
\hline
    Edge     & Deletion &   Contraction  \\ \hline
    $\{1,2\}$ & $4,7$ & $1,3$ \\ \hline
        $\{1,4\}$ & $2,6$ & $1,7$ \\ \hline
            $\{1,5\}$ & $2,3$ & $1,2$ \\ \hline
              $\{1,8\}$ & $2,3$ & $1,4$ \\ \hline
                    $\{1,9\}$ & $2,5$ & $2,3$ \\ \hline
                        $\{2,4\}$ & $5,6$ & $2,3$ \\ \hline
                                $\{2,7\}$ & $3,4$ & $2,4$ \\ \hline
                                        $\{4,5\}$ & $*$ & $2,4$ \\ \hline
                                                $\{4,9\}$ & $2,3$ & $4,7$ \\ \hline
                                                    $\{5,7\}$ & $2,4$ & $2,4$ \\ \hline
                                                        $\{5,9\}$ & $2,6$ & $2,5$ \\ \hline

\end{tabular}
\vspace*{.2in}

\caption{\small The graph obtained by deleting the edge in the first column becomes planar when deleting the two vertices in the second column. 
The graph obtained by contracting the edge in the first column becomes planar when deleting the two vertices in the second column. }
\end{table}

The graph $G'$ obtained from $\tts$ by deleting the edges $(4,5)$ is not  2-apex. We show it is nIK.
Denote by $G''$ the graph obtained from $G'$ through a $\ty$-move on the triangle $(1,5,9)$. 
Call the new vertex  11. 
See Figure \ref{Symmetricminus45}.
Delete  vertices 2 and 11 of $G''$ to obtain  a planar graph. 
This proves $G''$ is 2-apex and thus nIK.
Sachs~\cite{Sa} showed that the $\ty$-move preserves intrinsic linking.  
Essentially the same argument shows that the $\ty$-move also preserves intrinsic knotting.
So the graph $G'$ is nIK.

\begin{figure}[htpb!]
\begin{center}
\begin{picture}(440, 150)
\put(0,0){\includegraphics[width=6.3in]{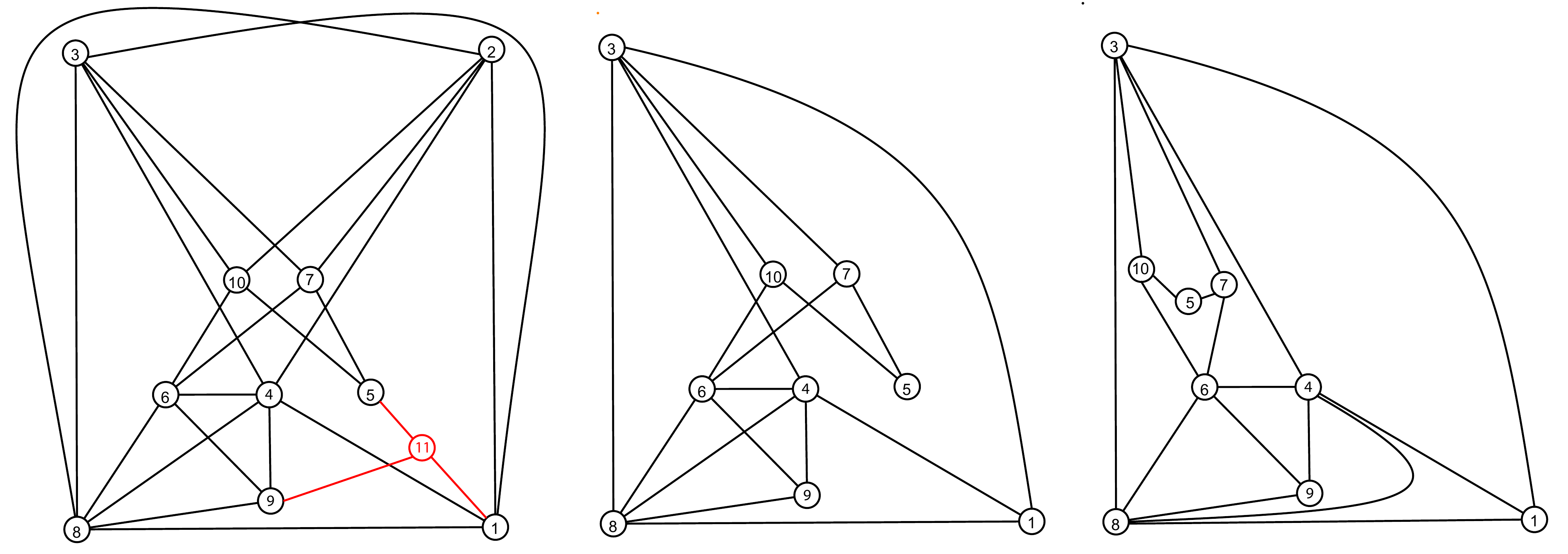}}
\end{picture}
\label{Symmetricminus45}
\caption{\small Left: Graph $G''$ obtained from $\tts$ by removing the edge $(4,5)$ followed by a $\ty$-move on the triangle $(1,5,9)$. Center: Graph $G'''$ obtained from $G''$ by deleting vertices 2 and 11. Right: Planar embedding of $G'''$.}
\end{center}
\end{figure}

\section{$\mu=5$ IK graphs}
\label{sec:mu5IK}

In this section we describe what is known about graphs $G$ with Colin de Verdi\`{e}re invariant five. We begin
with some basic observations. Let $K_1*G$ denote the graph obtained by coning a vertex over $G$, i.e., we 
add a vertex $a$ to $G$ along with edge $av$ for every $v \in V(G)$.

\begin{lemma}\cite{HLS}
\label{lem:basicmu}
Let $G$ be a graph.
\begin{enumerate}
\item If $G$ has at least one edge, then $\mu(K_1 * G) = \mu(G) +1$.
\item If $G'$ is a minor of $G$, then $\mu(G') \le \mu(G)$.
\end{enumerate}
\end{lemma}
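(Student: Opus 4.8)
The plan is to reduce both items to the linear‑algebraic definition of $\mu$ and verify them directly; this is the approach of \cite{HLS} (see also \cite{dV}), and I would follow it. Recall that for a graph $G$ on vertex set $\{1,\dots,n\}$, $\mu(G)$ is the maximum corank of a symmetric real $n\times n$ matrix $M$ with: (i) for $i\neq j$, $M_{ij}<0$ if $ij\in E(G)$ and $M_{ij}=0$ if $ij\notin E(G)$ (the diagonal being unconstrained); (ii) $M$ has exactly one negative eigenvalue, and it is simple; and (iii) the Strong Arnold Property holds, meaning the only symmetric $X$ with $MX=0$ and $X_{ij}=0$ whenever $i=j$ or $ij\in E(G)$ is $X=0$ (equivalently: the manifold of symmetric matrices with the prescribed off-diagonal zero pattern and the manifold of matrices of corank at least $\operatorname{corank}M$ intersect transversally at $M$). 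I will call such an $M$ with $\operatorname{corank}M=\mu(G)$ an \emph{optimal} matrix for $G$.

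For part (2), it suffices to establish monotonicity under the three elementary reductions that generate all minors: deleting an isolated vertex, deleting an edge, and contracting an edge (an ordinary vertex deletion is deletion of the incident edges followed by an isolated‑vertex deletion). In each case I would take an optimal matrix $M'$ for the reduced graph $G'$ and construct from it a matrix $M$ for $G$ with $\operatorname{corank}M\ge\operatorname{corank}M'$ and still satisfying (i)--(iii); this yields $\mu(G)\ge\operatorname{corank}M'=\mu(G')$. For an isolated‑vertex deletion one simply takes $M=M'\oplus[d]$ with $d>0$. For an edge deletion, $M$ is obtained by changing the forced zero in the relevant off‑diagonal position to a small negative number: a naive change can make the corank drop, and it is precisely the Strong Arnold Property of $M'$ that lets one instead move along the constant‑corank locus (by the transversality/implicit‑function‑theorem argument), keeping the corank fixed while the eigenvalue count is controlled by continuity. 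Contraction of an edge is the substantial case: one ``splits'' the contracted vertex back into its two original endpoints via a partitioned/bordered construction, again invoking transversality to prevent a corank collapse. I expect this edge‑contraction step to be the main obstacle, as it is in the original proofs.

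For part (1), I would prove the two inequalities separately. For $\mu(K_1*G)\ge\mu(G)+1$: let $N$ be an optimal matrix for $G$ with (simple) negative eigenvalue $\lambda<0$, chosen so that its unique singular block corresponds to a single connected component; a Perron--Frobenius argument applied to the essentially‑nonnegative matrix $-N$ then produces an eigenvector $u$ for $\lambda$ that is strictly positive on that component, so $u\in\operatorname{range}(N)$. Border $N$ by a new row and column indexed by the cone vertex, with off‑diagonal entries $b:=-u$ (strictly negative, as required since the cone vertex is adjacent to all of $V(G)$) and a corner entry chosen so that, after a congruence that clears $b$ against $\operatorname{range}(N)$, the bordered matrix is congruent to $N$ with one additional zero; this $M$ has corank $\mu(G)+1$, exactly one negative eigenvalue, and (one checks) the Strong Arnold Property. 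For $\mu(K_1*G)\le\mu(G)+1$: given an optimal matrix for $K_1*G$ partitioned with the cone vertex first, its trailing $n\times n$ principal block $N$ is a valid matrix for $G$; eigenvalue interlacing bounds its negative‑eigenvalue count and a short computation (using that the border vector is nonzero, as it has all entries negative) gives $\operatorname{corank}N\ge\operatorname{corank}M-1$, so $\mu(G)\ge\operatorname{corank}N\ge\mu(K_1*G)-1$ once $N$ is shown to inherit exactly one negative eigenvalue and the Strong Arnold Property. The hypothesis that $G$ has an edge is exactly what makes these inertia arguments go through --- the identity is false without it, for instance $\mu(K_1*\overline{K_2})=\mu(P_3)=1<2=\mu(\overline{K_2})+1$ --- so the delicate point in part (1) is the inertia bookkeeping, especially ruling out, in the upper bound, that the corank drops by more than one.
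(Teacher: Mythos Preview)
The paper does not prove this lemma; it is quoted from \cite{HLS} and used as a black box, so there is no in-paper argument to compare your proposal against. Your plan is, in outline, the linear-algebraic proof one finds in \cite{HLS}.

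As a standalone sketch the approach is sound, with one small oversight worth flagging: in your lower bound for part~(1), if $G$ is disconnected the Perron--Frobenius eigenvector $u$ for the unique negative eigenvalue is supported on a single component, so $b=-u$ has zero entries and violates the sign constraint at the cone vertex (which is adjacent to \emph{every} vertex of $G$); one must either reduce to connected $G$ first or adjust the construction on the remaining blocks. The other points you yourself mark as delicate --- the Strong Arnold Property and the inertia count for the trailing principal block $N$ in the upper bound of~(1), and the edge-contraction step in~(2) --- are exactly where \cite{HLS} does the real work, and phrases like ``one checks'' and ``once $N$ is shown to\ldots'' defer those steps rather than resolve them.
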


\begin{lemma} 
\cite{dV}\cite{HLS}
\label{lem:smallmu}
\begin{enumerate}
\item $\mu(G) \leq 3$ if and only if $G$ is planar.
\item  $\mu(G) \leq 4$ if and only if $G$ is nIL.
\end{enumerate}
\end{lemma}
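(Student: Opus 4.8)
The plan is to recognize the two parts as the classical spectral characterizations of planarity (part (1), Colin de Verdi\`ere \cite{dV}) and of linkless embeddability (part (2), \cite{HLS}), and to organize each equivalence around the minor-monotonicity already available as Lemma~\ref{lem:basicmu}(2). Recall that $\mu(G)$ is the largest corank of a symmetric matrix $M$ whose off-diagonal sign pattern encodes $E(G)$ (negative on edges, zero on non-edges, free diagonal), which has exactly one negative eigenvalue, and which satisfies the Strong Arnold Property. A useful preliminary, using Lemma~\ref{lem:basicmu}(1), is that $\mu(K_n) = n-1$: starting from $\mu(K_2) = 1$ and iterating $K_n = K_1 * K_{n-1}$ (legitimate since $K_{n-1}$ has an edge for $n \ge 3$) raises $\mu$ by one at each step. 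Both equivalences then share the same architecture: a short ``obstruction'' direction (exhibit a forbidden minor of large $\mu$ and invoke monotonicity) and an analytically deep ``construction'' direction (turn a high-corank matrix into a topological witness for the obstruction).

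For part (1) I would argue $\mu(G) \le 3 \iff G$ planar. If $G$ is nonplanar, Kuratowski--Wagner gives a $K_5$ or $K_{3,3}$ minor; since $\mu(K_5) = 4$ (from the coning computation) and $\mu(K_{3,3}) = 4$ (exhibited by an explicit corank-$4$ matrix), monotonicity yields $\mu(G) \ge 4$. For the converse, assume $G$ is planar but that some admissible matrix $M$ has corank $\ge 4$; here I would use the discrete nodal-domain theory for matrices with a single negative eigenvalue, whereby kernel vectors have tightly controlled sign patterns, so that four independent null vectors force a separation configuration incompatible with a planar embedding of $G$. This contradiction gives $\mu(G) \le 3$, which is the content of \cite{dV}.

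For part (2) I would argue $\mu(G) \le 4 \iff G$ nIL in the same spirit. If $G$ is IL, then by Robertson--Seymour--Thomas \cite{RST} it has a minor in the Petersen family; each of the seven Petersen-family graphs has $\mu = 5$ (anchored by $\mu(K_6) = 5$ from the coning computation), so monotonicity gives $\mu(G) \ge 5 > 4$. The converse, nIL $\Rightarrow \mu(G) \le 4$, is the hard direction: from a corank-$5$ admissible matrix one forms its nullspace representation, sending each vertex to a point of $\R^5$, and then applies the Borsuk-type theorem for antipodal links of \cite{HLS} to extract a nonsplit two-component link in the associated spatial embedding. Since $G$ is nIL no such link exists, so no admissible matrix has corank $\ge 5$ and hence $\mu(G) \le 4$.

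The two ``construction'' directions are the genuine obstacles, and they are deep theorems rather than calculations: the planar case rests on the nodal-domain analysis of these matrices, and the linkless case on the antipodal-link Borsuk theorem of \cite{HLS}, which is the most delicate ingredient of the whole statement. Since both are established and cited, in the paper itself I would simply record the two equivalences with attributions to \cite{dV} and \cite{HLS}; the only computations I would carry out explicitly are the finite $\mu$-values ($\mu(K_5) = \mu(K_{3,3}) = 4$ and $\mu = 5$ on the Petersen family) that feed minor-monotonicity in the easy directions.
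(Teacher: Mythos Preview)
The paper does not prove this lemma at all; it simply states it with citations to \cite{dV} and \cite{HLS} and moves on. Your proposal ultimately arrives at exactly the same conclusion—record the equivalences with attribution—so it matches the paper's treatment, with the surrounding sketch being accurate but extraneous background that the paper omits.
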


\begin{lemma} \cite{HLS}
\label{lem:muty}
If $\mu(G) \geq 4$ and a $\ty$ move on $G$ produces
$G'$, then $\mu(G) = \mu(G')$.
\end{lemma}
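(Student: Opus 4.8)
The plan is to argue directly from the matrix definition of $\mu$. Recall that a \emph{CdV matrix} for a graph $\Gamma$ on vertex set $V$ is a symmetric $V\times V$ matrix $M$ with $M_{uv}<0$ when $uv\in E(\Gamma)$, $M_{uv}=0$ when $u\neq v$ and $uv\notin E(\Gamma)$ (the diagonal being free), having exactly one negative eigenvalue, and satisfying the Strong Arnold Property (SAP); then $\mu(\Gamma)$ is the largest possible value of $\mathrm{corank}\,M$ over such $M$. Say the $\ty$ move is performed on a triangle $\{a,b,c\}$ of $G$, so that $G'$ is obtained by adjoining a vertex $d$ with $N_{G'}(d)=\{a,b,c\}$ and deleting the three triangle edges; note $a,b,c$ are pairwise non-adjacent in $G'$. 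I would prove $\mu(G')\ge\mu(G)$ and $\mu(G')\le\mu(G)$ separately, each time moving an optimal matrix across the move by a Schur complement at $d$.

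For $\mu(G')\ge\mu(G)$: start from an optimal CdV matrix $M$ for $G$, $\mathrm{corank}\,M=\mu(G)$. Fix any real $t>0$; since $M_{ab},M_{bc},M_{ca}<0$, there are $s_a,s_b,s_c<0$ with $s_is_j=-t\,M_{ij}$ for the three pairs $\{i,j\}\subset\{a,b,c\}$. Define $M'$ on $V(G)\cup\{d\}$ by $M'_{dd}=t$, $M'_{di}=s_i$ for $i\in\{a,b,c\}$, $M'_{dv}=0$ for every other $v$, $M'_{ij}=0$ for $\{i,j\}\subset\{a,b,c\}$, $M'_{ii}=M_{ii}+s_i^2/t$ for $i\in\{a,b,c\}$, and $M'_{uv}=M_{uv}$ in all remaining positions. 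A direct check shows the Schur complement of $M'$ at $d$ equals $M$, so $\mathrm{corank}\,M'=\mathrm{corank}\,M=\mu(G)$, and by additivity of inertia across a Schur complement (the pivot $t>0$ contributes one positive eigenvalue and $M$ the single negative one) $M'$ has exactly one negative eigenvalue. It remains to confirm the SAP for $M'$; I expect this to follow from the SAP of $M$ by the standard fact that adjoining a vertex and eliminating it along a nonzero pivot preserves the SAP. Then $M'$ is a CdV matrix for $G'$ of corank $\mu(G)$.

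For $\mu(G')\le\mu(G)$: start from an optimal CdV matrix $M'$ for $G'$; the degree-three vertex $d$ has $M'_{da},M'_{db},M'_{dc}<0$ and $M'_{dv}=0$ otherwise. If $M'_{dd}>0$, let $M''$ be the Schur complement of $M'$ at $d$. It is supported on $V(G)$, its $ab$, $bc$, $ca$ entries equal $-M'_{di}M'_{dj}/M'_{dd}<0$, and all other entries are unchanged, so $M''$ is a CdV-type matrix for $G$; as before it has corank $\mu(G')$ and exactly one negative eigenvalue, and the SAP transfers by the same lemma, giving $\mu(G)\ge\mu(G')$. The delicate case is $M'_{dd}\le 0$, and this is where the hypothesis $\mu(G)\ge 4$ must enter. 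Here I would show that $M'_{dd}\le 0$ forces $\mathrm{corank}\,M'\le 3$, so that $\mu(G')\le 3\le\mu(G)$: examine the $4\times 4$ principal submatrix of $M'$ on $\{d,a,b,c\}$, which has the shape of a CdV matrix of a star $K_{1,3}$ and, as a principal submatrix of $M'$, has at most one negative eigenvalue pinned down by the $d$-coordinate; combine this with the sparsity of the $d$-row. Alternatively, one can try to perturb $M'$ within the set of optimal matrices so that $M'_{dd}$ becomes positive.

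I expect the main obstacle to be precisely this last subcase, together with the SAP bookkeeping in the two constructions; the rest is routine linear algebra. Finally, the case $\mu(G)=4$ admits a matrix-free shortcut: by Lemma~\ref{lem:smallmu}, $\mu(G)=4$ says $G$ is non-planar and nIL; nIL graphs are closed under $\ty$ moves (Sachs~\cite{Sa}), so $\mu(G')\le 4$, while $\yt$ moves preserve planarity and $G$ is recovered from $G'$ by a $\yt$ move at $d$, so $G'$ is non-planar and $\mu(G')\ge 4$; hence $\mu(G')=4=\mu(G)$. Since the general argument above is the one carried out by van der Holst, Lov\'asz, and Schrijver, one may instead simply cite \cite{HLS}.
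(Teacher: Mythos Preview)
The paper gives no proof of this lemma at all; it is stated with the citation \cite{HLS} and used as a black box. Your closing remark---that one may simply cite \cite{HLS}---is exactly what the paper does, so in that sense your proposal and the paper agree.

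Your sketch does follow the Schur-complement strategy that \cite{HLS} use, and the algebra you write out (solving $s_is_j=-tM_{ij}$, matching inertia via Haynsworth) is correct. But the places you flag as ``expected'' or ``delicate'' are genuine gaps, not routine bookkeeping. In particular, your appeal to a ``standard fact that adjoining a vertex and eliminating it along a nonzero pivot preserves the SAP'' is not a fact: SAP does \emph{not} transfer automatically across a Schur complement in either direction, and establishing it is precisely where the hypothesis $\mu(G)\ge 4$ has to be used. Likewise, your proposed handling of the case $M'_{dd}\le 0$ (that it forces $\mathrm{corank}\,M'\le 3$ via the $4\times4$ principal block on $\{d,a,b,c\}$) is not justified by what you wrote; the one-negative-eigenvalue condition on $M'$ does not by itself bound the corank this way. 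Filling these two holes is essentially the content of the \cite{HLS} argument, so your sketch is an outline rather than a proof. Your matrix-free shortcut for the case $\mu(G)=4$ is correct and pleasant, but of course does not cover $\mu(G)\ge 5$.
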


For $v \in V(G)$, let $G-v$ denote the graph that results after deleting $v$ and
all its edges.

\begin{lemma} 
\label{lem:munapex}
If $G$ is $n$-apex for $n \geq 0$, then $\mu(G) \leq n+3$
\end{lemma}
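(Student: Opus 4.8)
The plan is to prove the statement by induction on $n$, using the two parts of Lemma~\ref{lem:basicmu} together with Lemma~\ref{lem:smallmu}(1) as the engine.

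For the base case $n=0$, the graph $G$ is planar by definition of $0$-apex, so $\mu(G)\le 3$ is exactly Lemma~\ref{lem:smallmu}(1). For the inductive step, suppose the claim holds for $n-1\ge 0$ and let $G$ be $n$-apex. Choose a vertex $v\in V(G)$ such that $G-v$ is $(n-1)$-apex. The key observation is that $G$ is isomorphic to a subgraph of $K_1 * (G-v)$: send $v$ to the cone vertex $a$ and fix every other vertex; each edge of $G-v$ is already present in $K_1*(G-v)$, and each edge of $G$ incident to $v$ maps to an edge from $a$ to a vertex of $G-v$, which exists because $a$ is adjacent to all of $V(G-v)$. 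Hence, by Lemma~\ref{lem:basicmu}(2), $\mu(G)\le \mu\big(K_1*(G-v)\big)$.

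Assuming for the moment that $G-v$ has at least one edge, Lemma~\ref{lem:basicmu}(1) gives $\mu\big(K_1*(G-v)\big)=\mu(G-v)+1$, and the inductive hypothesis applied to the $(n-1)$-apex graph $G-v$ gives $\mu(G-v)\le (n-1)+3=n+2$. Combining, $\mu(G)\le n+3$, as desired. The one place where care is needed is precisely the hypothesis ``at least one edge'' in Lemma~\ref{lem:basicmu}(1): if $G-v$ has no edges, then $G$ is a subgraph of $K_1$ joined to an edgeless graph, i.e.\ a subgraph of a star (together with possibly some isolated vertices), which is planar, so $\mu(G)\le 3\le n+3$ directly from Lemma~\ref{lem:smallmu}(1). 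This degenerate case, and the analogous trivial situations where $G-v$ is empty, are the only real bookkeeping obstacle; everything else is a clean two-line application of the cited lemmas.

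As a remark, one could phrase the argument non-inductively by writing, for an $n$-apex graph $G$ with $G-\{v_1,\dots,v_n\}$ planar, that $G$ is a subgraph of the iterated cone $K_1*(K_1*(\cdots*(G-\{v_1,\dots,v_n\})))$ and peeling off cone vertices one at a time with Lemma~\ref{lem:basicmu}(1); but the inductive formulation above makes the edge-count side condition easiest to dispatch, so that is the version I would write out in full.
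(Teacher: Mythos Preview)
Your proof is correct and follows essentially the same approach as the paper's: induction on $n$, with the base case handled by Lemma~\ref{lem:smallmu} and the inductive step by recognizing $G$ as a subgraph of $K_1*(G-v)$ and applying Lemma~\ref{lem:basicmu}. If anything, you are more careful than the paper, since you explicitly address the ``at least one edge'' hypothesis of Lemma~\ref{lem:basicmu}(1), which the paper silently ignores.
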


\begin{proof}
We use induction on $n$. If $n=0$, the result follows from Lemma~\ref{lem:smallmu}.
Suppose $G$ is $n+1$-apex and $v \in V$ such that $G - v$ is $n$-apex. 
Then $G$ is a subgraph of $K_1 * (G-v)$, and, by Lemma~\ref{lem:basicmu},
$\mu(G) \leq \mu(G-v) + 1 \leq (n+1)+3$.
\end{proof}

\begin{lemma} 
\label{lem:muvdel}
If $G$ is IK and there is a vertex $v$ such that $G-v$ is nIL, then $\mu(G) = 5$.
\end{lemma}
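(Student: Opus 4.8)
The plan is to sandwich $\mu(G)$ between $5$ and $5$ using the lemmas already established. For the lower bound, I would argue that since $G$ is IK, it is IL by the Robertson--Seymour--Thomas theorem, so $\mu(G) \geq 5$ by the contrapositive of Lemma~\ref{lem:smallmu}(2). For the upper bound, I would use the hypothesis that $G - v$ is nIL: by Lemma~\ref{lem:smallmu}(2) again, $\mu(G-v) \leq 4$, and then since $G$ is a subgraph of $K_1 * (G-v)$ (adding the cone vertex $v$ back, together with possibly extra edges, recovers $G$ as a subgraph), Lemma~\ref{lem:basicmu} gives $\mu(G) \leq \mu(K_1 * (G-v)) = \mu(G-v) + 1 \leq 5$. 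Combining the two bounds yields $\mu(G) = 5$.

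A cleaner way to phrase the upper bound is to invoke Lemma~\ref{lem:munapex}: the hypothesis that $G-v$ is nIL means $G-v$ is $0$-apex in the relevant sense only if it is planar, which it need not be, so that shortcut does not quite apply; instead I would run the one-step argument above directly, mirroring the proof of Lemma~\ref{lem:munapex} but starting from ``$G-v$ is nIL'' rather than ``$G-v$ is planar.'' The point is that $\mu(G-v) \le 4$ is all that is needed, and $G \subseteq K_1 * (G-v)$ handles the rest via monotonicity of $\mu$ under subgraphs (a special case of minors, Lemma~\ref{lem:basicmu}(2)) and the coning formula Lemma~\ref{lem:basicmu}(1). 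One small technical point to check: Lemma~\ref{lem:basicmu}(1) requires $G-v$ to have at least one edge; since $G$ is IK it certainly has many edges, and deleting a single vertex cannot destroy all of them (an edgeless graph on $\le 9$ vertices is planar, hence far from underpinning an IK graph), so this hypothesis is automatic.

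I do not anticipate a genuine obstacle here — the statement is essentially a corollary of the preceding lemmas. The only thing to be careful about is the direction of each inequality and making sure the subgraph containment $G \subseteq K_1 * (G-v)$ is stated correctly: the cone $K_1 * (G-v)$ has vertex set $V(G)$, contains all edges of $G-v$, and contains every edge from the cone vertex $v$ to the rest, which is a superset of the edges of $G$ incident to $v$; hence every edge of $G$ is present, giving the containment. With that in place the proof is three lines.

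\begin{proof}
Since $G$ is IK, it is IL by~\cite{RST}, so $\mu(G) \geq 5$ by Lemma~\ref{lem:smallmu}(2). For the reverse inequality, note that $G$ is a subgraph of $K_1 * (G - v)$: the latter has vertex set $V(G)$, contains $G - v$, and contains every edge joining $v$ to $V(G) \setminus \{v\}$, hence in particular every edge of $G$ incident to $v$. Since $G$ is IK it has at least one edge, and deleting the single vertex $v$ cannot remove all edges, so $G - v$ has at least one edge. Therefore, by Lemma~\ref{lem:basicmu} and Lemma~\ref{lem:smallmu}(2),
\[
\mu(G) \leq \mu(K_1 * (G - v)) = \mu(G - v) + 1 \leq 4 + 1 = 5,
\]
using that $G - v$ is nIL. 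Combining the two bounds gives $\mu(G) = 5$.
\end{proof}
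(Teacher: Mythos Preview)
Your proof is correct and follows exactly the same approach as the paper's: lower bound from IK $\Rightarrow$ IL $\Rightarrow$ $\mu\ge 5$, upper bound from $G\subseteq K_1*(G-v)$ together with $\mu(G-v)\le 4$. The only difference is that you add the (harmless) verification that $G-v$ has an edge so that Lemma~\ref{lem:basicmu}(1) applies, which the paper leaves implicit.
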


\begin{proof}
Robertson, Seymour, and Thomas \cite{RST} established that $G$ IK implies $G$ is IL. By Lemma~\ref{lem:smallmu}, 
$\mu(G) \geq 5$ and $\mu(G-v) \leq 4$. Since $G$ is a subgraph of $K_1 * (G-v)$, using 
Lemma~\ref{lem:basicmu}, $\mu(G) \leq 5$.
\end{proof}

For $e \in E(G)$, let $G-e$ denote the edge deletion minor and $G/e$ the edge contraction minor of $G$.

\begin{lemma} 
\label{lem:musimple}
If $G$ is IK and has a nIL simple minor, then $\mu(G) = 5$.
\end{lemma}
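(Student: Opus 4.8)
\textbf{Proof proposal for Lemma~\ref{lem:musimple}.}

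The plan is to reduce to the two structural facts already assembled in this section: if $G$ is IK then $G$ is IL (Robertson--Seymour--Thomas), hence $\mu(G)\ge 5$ by Lemma~\ref{lem:smallmu}(2); and any nIL graph $H$ satisfies $\mu(H)\le 4$, again by Lemma~\ref{lem:smallmu}(2). So the entire content is to produce the upper bound $\mu(G)\le 5$ from the existence of a nIL simple minor, and by hypothesis that simple minor is either an edge deletion minor $G-e$ or an edge contraction minor $G/e$. I would treat the two cases separately.

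First, suppose $H=G-e$ is nIL, where $e=uv$. Then $G$ is a subgraph of $K_1 * (G-e)$: indeed, adjoin a cone vertex $a$ to $G-e$; the coned graph contains all edges of $G-e$ together with $au$ and $av$, and contracting the path $u\,a\,v$ (equivalently, identifying $a$ with one endpoint) recovers the edge $e$. More cleanly, $G$ is obtained from the subgraph of $K_1*(G-e)$ on vertex set $V(G)$ together with the single edge $au$ by contracting $au$; so $G$ is a minor of $K_1*(G-e)$. By Lemma~\ref{lem:basicmu}(1), $\mu(K_1*(G-e))=\mu(G-e)+1\le 4+1=5$ (note $G-e$ has an edge since $G$ is IK and hence has many edges), and by Lemma~\ref{lem:basicmu}(2), $\mu(G)\le \mu(K_1*(G-e))\le 5$. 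Combined with $\mu(G)\ge 5$ this gives $\mu(G)=5$.

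Second, suppose $H=G/e$ is nIL. Contracting an edge in $K_1*$ of a graph behaves well: if $e=uv\in E(G)$ and we form $G/e$, then $K_1*(G/e)$ contains $G/e$ plus a cone vertex, and $G$ is a subgraph of $K_1*(G/e)$ after we observe that the contracted vertex $w$ (image of $u$ and $v$) together with the cone vertex $a$ can be ``pulled apart'': take the subgraph of $K_1*(G/e)$ consisting of all of $G/e$ and the edge $aw$, and split $w$ back into $u,v$ with $u$ keeping the edges of $G$ at $u$, $v$ keeping the rest, and $a$ playing the role of... Actually the slicker route is: $G/e$ is a minor of $G$, but that gives the wrong inequality direction; instead use that $G$ is $n$-apex-type comparison is unavailable, so I would argue $G$ is a minor of $K_1*(G/e)$ directly by vertex-splitting the cone vertex. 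Concretely, in $K_1*(G/e)$ let $a$ be the cone vertex and $w$ the image of $e$; delete from $a$ all edges except those going to (the pre-images of) the neighbors of $u$ in $G$, rename $a$ as $u$ and $w$ as $v$, and add back edge $uv$: this exhibits $G$ as a subgraph of a graph obtained from $K_1*(G/e)$ by deleting edges, hence $\mu(G)\le\mu(K_1*(G/e))=\mu(G/e)+1\le 5$, and again $\mu(G)=5$.

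The main obstacle is the edge-contraction case: the ``$G$ is a subgraph of $K_1*(G-e)$'' trick from Lemma~\ref{lem:muvdel} does not transcribe verbatim, and one must check carefully that the vertex-splitting of the cone vertex in $K_1*(G/e)$ really does realize $G$ as a minor (or subgraph of a minor) of $K_1*(G/e)$ — i.e., that every edge of $G$ is accounted for and no extra adjacency is forced. Once that combinatorial bookkeeping is done, both cases collapse to a one-line application of Lemma~\ref{lem:basicmu}. (If one prefers to avoid the contraction subtlety, an alternative is to note that $G/e$ nIL and $G$ IK already forces, via Lemma~\ref{lem:munapex}, that $G$ cannot be $1$-apex-or-less while being controlled at level $5$; but the direct minor argument above is cleaner and self-contained.)
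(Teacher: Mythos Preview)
Your approach is the same as the paper's: get $\mu(G)\ge 5$ from IK $\Rightarrow$ IL, and get $\mu(G)\le 5$ by exhibiting $G$ as a minor of $K_1*H$ where $H$ is the nIL simple minor, then apply Lemma~\ref{lem:basicmu}. Your deletion case is fine; the paper phrases it as ``subdivide $e$ to get $G'\simeq G$, and $G'$ is a subgraph of $K_1*(G-e)$,'' which is your argument verbatim.

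For the contraction case you are circling the right idea, but the write-up is tangled and the phrase ``add back edge $uv$'' is a red flag --- adding an edge is not a minor operation. In fact no splitting or re-adding is needed: $G$ is already a \emph{subgraph} of $K_1*(G/e)$. With $e=uv$, cone vertex $a$, and contracted vertex $w$, map $u\mapsto a$, $v\mapsto w$, and fix every other vertex; then $uv\mapsto aw$, each $ux\mapsto ax$, each $vy\mapsto wy$, and each remaining $xy\mapsto xy$ are all edges of $K_1*(G/e)$. This one-line observation is exactly what the paper records, and it dissolves the ``main obstacle'' you flag.
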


\begin{proof} The proof is similar to that of the previous lemma. In particular $\mu(G) \geq 5$.
By definition, there is an edge $e$ so that $G-e$ or $G/e$ is nIL. Suppose first that $G-e$ is nIL.
By Lemma~\ref{lem:smallmu},  $\mu(G-e) \leq 4$. 
We can form a graph $G'$ homeomorphic to $G$ by adding a degree two vertex between $a$ and $b$, the
vertices of $e$. Then $G'$  is a subgraph of $K_1 * (G-e)$, and using 
Lemma~\ref{lem:basicmu}, $\mu(G') \leq 5$. Since $G$ is a minor of $G'$, by Lemma~\ref{lem:basicmu}, 
$\mu(G) \leq 5$.

Next, suppose $G/e$ is nIL so that $\mu(G/e) \leq 4$. We can again recognize $G$ as a subgraph of
$K_1 * (G/e)$ which implies $\mu(G) \leq 5$.
\end{proof}

We remark that many of the known MMIK graphs have $\mu = 6$. In \cite{FMMNN}, the authors 
provide a listing of 264 MMIK graphs, of which 
105 are in the families of $K_7$, $K_{3,3,1,1}$, and $E_9+e$. We will now verify that each 
of these three graphs has $\mu = 6$. By Lemma~\ref{lem:muty}, all 105 graphs have $\mu$ 
invariant six. As shown in~\cite{dV}, $\mu(K_n) = n-1$ when $n > 1$, so $\mu(K_7) = 6$.
The graph $K_{3,3,1,1}$ is $K_1 * K_{3,3,1}$. Since $K_{3,3,1}$ is an obstruction for
intrinsic linking~\cite{RST}, by Lemma~\ref{lem:smallmu}, $\mu(K_{3,3,1,1}) = \mu(K_{3,3,1}) + 1 \geq 6$. On the other hand,
$K_{3,3,1,1}$ is $3$-apex, which, by Lemma~\ref{lem:munapex}, shows $\mu(K_{3,3,1,1}) \leq 6$.
Since $E_9$ is in the $K_7$ family, by Lemma~\ref{lem:muty}, $\mu(E_9) = \mu(K_7) = 6$. 
By Lemma~\ref{lem:basicmu}, $\mu(E_9+e) \geq \mu(E_9) = 6$. On the other hand,
$E_9+e$ is $3$-apex, so, by Lemma~\ref{lem:munapex}, $\mu(E_9+e) \leq 6$.
By Lemma~\ref{lem:muty} all 110 graphs in the $E_9+e$ family have $\mu = 6$ (not just the
33 that are MMIK). Note that these 110 graphs are all IK~\cite{GMN}.

In contrast, in this paper we have introduced several new examples of IK graphs with
$\mu = 5$. Such examples were known previously. For example, 
Foisy~\cite{F} provided an example of a MMIK graph $F$ that becomes nIL on deletion of 
a vertex. By Lemma~\ref{lem:muvdel}, $\mu(F) = 5$. By Lemma~\ref{lem:musimple},
$\mu(\etf) = 5$ as it is IK with both a nIL edge deletion minor as well a nIL edge contraction minor.
Similarly, $\mu(\tth) = 5$ since it is IK with a nIL edge deletion minor. Finally, 
we argue that $\mu(\tts) = 5$. Since $\tts$ is a minor of $\etf$, then $\mu(\tts) \leq \mu(\etf) = 5$.
On the other hand, as we proved in Section~\ref{Order 10 IK minor}, 
$\tts$ is IK, hence~\cite{RST} IL, and $\mu(\tts) \geq 5$ by Lemma~\ref{lem:smallmu}. 
By Lemma~\ref{lem:muty}, graphs in the families of $\tts$, $\tth$, and $\etf$ also have 
$\mu = 5$. 
Using computers, the $\tts$ family alone provides more than 600 new examples of IK graphs 
with Colin de Verdi{\`e}re invariant five.

We conclude this section with a Proof of Proposition~\ref{prop:order10}.

\begin{proof}
 Assume there exists an IK graph $G$ of order less than 10 which admits a nIL edge contraction minor. 
 As such, by Lemma~\ref{lem:musimple}, $\mu(G) = 5$. 
 Since $\mu$ is minor monotone (Lemma~\ref{lem:basicmu}), any MMIK minor of $G$ must have $\mu=5$. 
 By work of Goldberg, Mattman, and Naimi ~\cite{GMN}, and Mattman, Morris, and Ryker ~\cite{MMR}, the MMIK graphs of order at most 9 are known. 
 With the exception of $G_{9,28}$, depicted in Figure ~\ref{G928}(a), all the others are either in the $K_7$ family, the $K_{3,3,1,1}$ family, or the $E_9$+e family, and thus have $\mu=6$. 
 \begin{figure}[htpb!]
\begin{center}
\begin{picture}(370, 120)
\put(0,0){\includegraphics[width=5in]{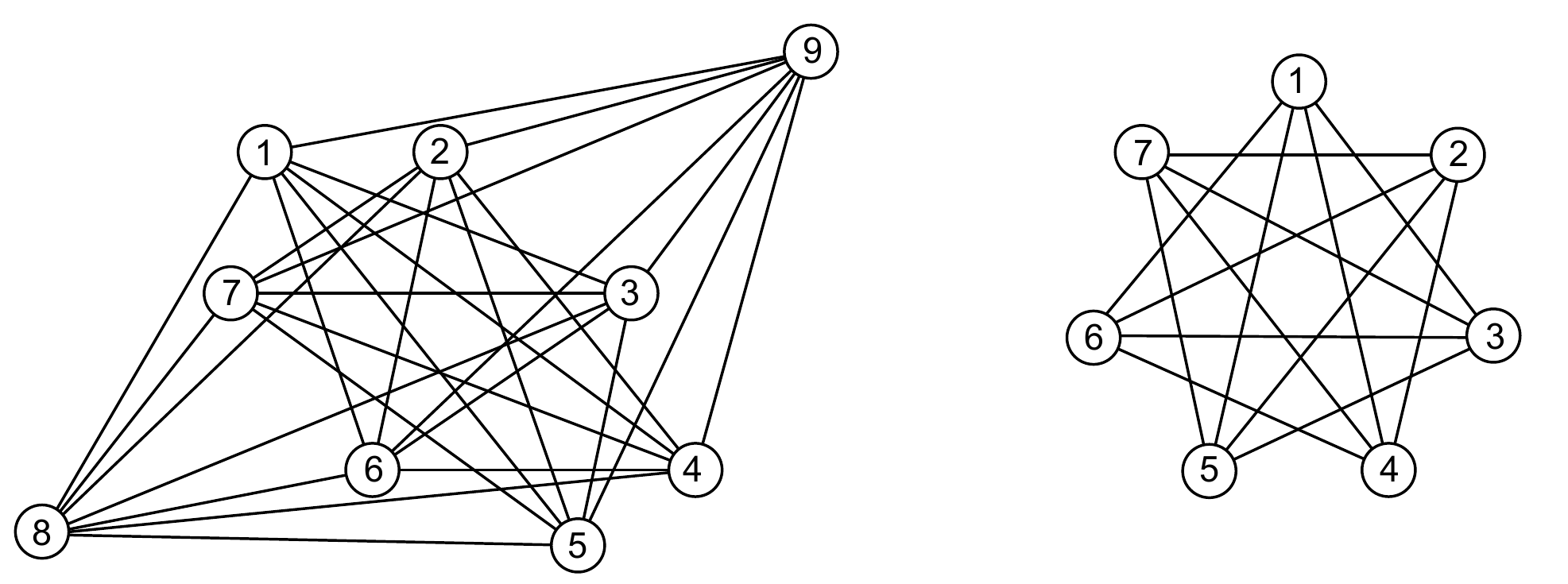}}
\end{picture}
\caption{(a) The graph $G_{9,28}$.\,\,\,\,\,\, (b) The complement of a 7-cycle. }
\label{G928}
\end{center}
\end{figure} 
It follows that $G$ must have order 9 and that $G_{9,28}$ is a subgraph of $G$. 
If contracting an edge $e$ of $G$ produces a nIL minor, then deleting either endpoint of $e$ must also produce a nIL minor (subgraph). 
Since $G_{9,28}$ is a subgraph of $G$, deleting the same vertex must produce a nIL subgraph of $G_{9,28}$. 
The graph $G_{9,28}$ is highly symmetric, having a rich automorphism group, and it is structured as two nonadjacent cones over the complement of a 7 cycle (the graph depicted in Figure ~\ref{G928}(b)). 
Up to isomorphism, there are only two induced subgraphs of order 8 inside $G_{9,28}$: The graph obtained by deleting the vertex labeled 9, and the graph obtained by deleting the vertex labeled 7. 
Neither of these are nIL, since they both have a $K_6$ minor. 
For the first graph, contracting the edges $(4,7)$ and $(2,6)$ produces a complete minor on the 6 vertices. 
For the second graph, contracting the edges $(4,9)$ and $(2,6)$ also produces a complete minor on the 6 vertices.
\end{proof}

\section{Erratum}
\label{erratum}

In this section we discuss an error in the proof of Propostion~2 of \cite{NPS}.
The proposition asserts that if a graph $G$ has a paneled embedding,
and an edge is added to $G$ between two vertices $a$ and $b$
that have a common adjacent vertex $v$,
then $G+ab$ has a knotless embedding.

\begin{figure}[ht]
 \centering
 \includegraphics[width=110mm]{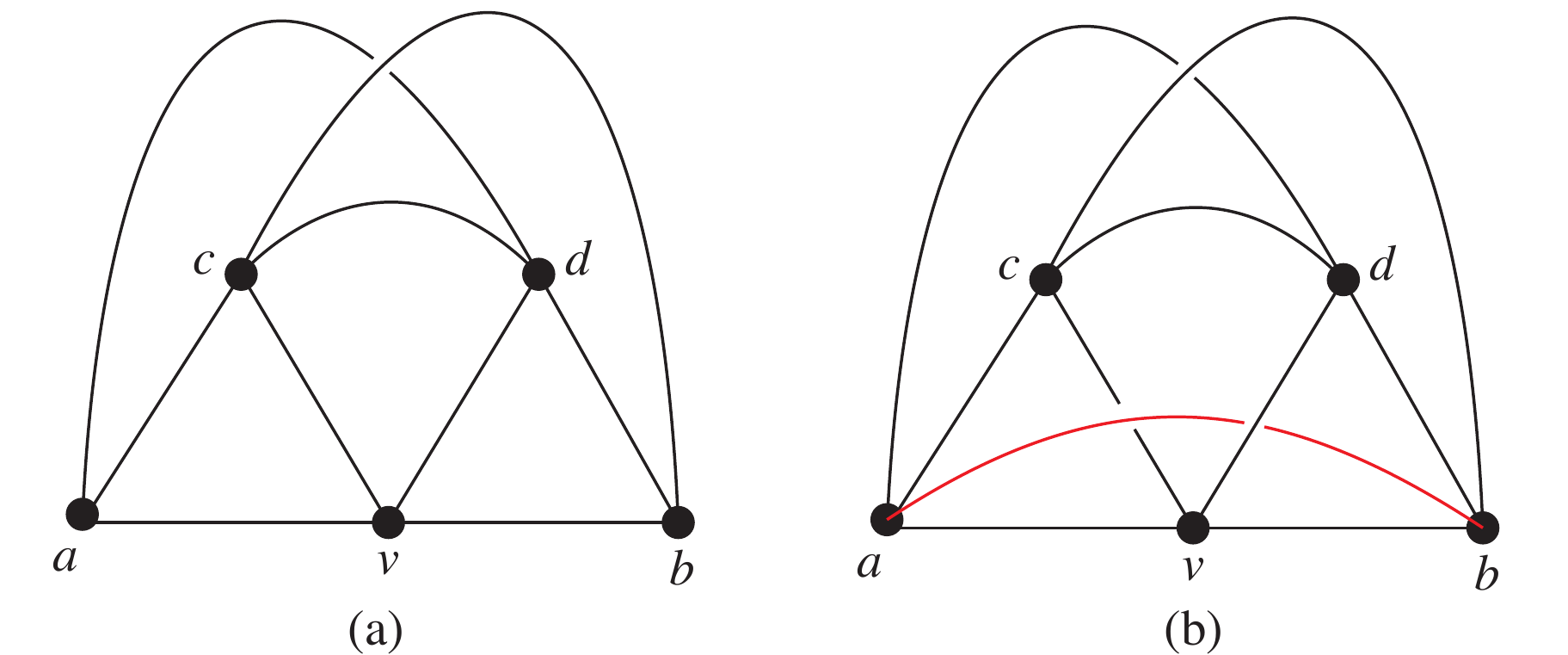}
 \put(-310,80){\large $\Gamma$}
 \put(-150,80){\large $\Gamma'$}

 \caption{
(a) Every cycle in $\Gamma$ is paneled.
(b) $\Gamma'$ contains a trefoil.
 \label {fig-counterexample}
 }
\end{figure}

In the proof of Proposition~2, it is first shown that one can assume 
there is a path $P_{ab} \subset G$ from $a$ to $b$ disjoint from $v$.
Next, the proof claims that
in any paneled embedding $\Gamma$ of $G$,
if $D$ is a panel for the cycle $P_{ab} \cup av \cup vb$ in $\Gamma$,
then embedding the new edge $ab$ in $D$
yields a knotless embedding $\Gamma'$ of $G + ab$.
Figure~\ref{fig-counterexample} shows a counterexample to this claim,
and will be used to explain where the error in the proof of Propostion~2 lies.

It is not difficult to see that in Figure~\ref{fig-counterexample}(a), 
every cycle in $\Gamma$ is paneled.
In particular, the cycle $acdbva$
bounds a panel $D$ such that
$vc$ and $vd$ lie, respectively, below and above $D$ in the figure.
If we embedd the edge $ab$ in $D$ as in Figure~\ref{fig-counterexample}(b),  
we see that the cycle $abcvda$ is a trefoil,
and hence $\Gamma'$ isn't a knotless embedding as claimed.

The error is specifically in the last few sentences of the penultimate paragraph in the proof,
where it mentions a Type~1 Reidemeister move on $P_1 \cup \{e\}$.
The proof overlooks the possibility that 
$P_{bv}$ may prevent this Reidemeister move,
as is the case in Figure~\ref{fig-counterexample}(b)
(for reference, the paths $acdb$, $adv$, and $bcv$ in Figure~\ref{fig-counterexample}
represent, respectively, the paths $P_{ab}$, $P_{av}$, and $P_{bv}$ in the proof of Propostion~2).


\section{Appendix}

We give edge lists for graphs $G_{11,35}, G_{10,30}, $ and $G_{10,26}$:

$$E(G_{11,35})= \{(1, 2), (1, 3), (1, 4), (1, 5), (1, 8), (1, 9), (2, 3), (2, 4), (2,8), (3, 4), (3, 5), $$ $$(3, 6),(3, 7), (3, 8), (3, 10), (3, 11), (4,5), (4, 6), (4, 8), (4, 9), (4, 10), (5, 6), (5, 7), (5, 9), $$ $$ (5, 10), (5, 11), (6, 7), (6, 8), (6, 9), (6, 10), (6, 11), (7, 11), (8, 9), (10, 11), (2, 11)\}$$

$$E(G_{10,30})=\{ (1, 5), (1, 7), (1, 8), (1, 9), (1, 10), (2, 3), (2, 4), (2, 5), (2, 
  6), (2, 7), (2, 10), $$ $$ (3, 4), (3, 6),  (3, 8), (3, 9), (3, 10), (4, 
  6), (4, 8), (4, 9), (5, 6), (5, 7), (5,8), (5,10),  (6, 7), $$ $$ (6, 8), (6, 9), (7, 
  9), (7, 10), (8,10), (9,10) \}$$

$$E(G_{10,26})= \{ (1, 2), (1, 3), (1, 4), (1, 5), (1, 8), (1, 9), (2, 4), (2, 7), (2, 
  8), (2, 10), (3, 4), $$ $$ (3, 7), (3, 8),  (3, 10), (4, 5), (4, 6), (4, 
  8), (4, 9), (5, 7), (5, 9), (5, 10), (6, 7),  (6, 8), (6, 9), $$ $$ (6, 
  10), (8, 9) \}$$

\label{appndx}

\bibliographystyle{amsplain}

\begin{thebibliography}{99}


\bibitem{A} C.C.\ Adams. The knot book. American Mathematical Soc., 1994.

\bibitem{BDLST} A.\ Brouwer, R.\ Davis, A.\ Larkin, D.\ Studenmund, and C.\ Tucker. ``Intrinsically $S^1$ 3-linked graphs and other aspects of $S^1$ embeddings." Rose-Hulman Undergraduate Mathematics Journal 8, no. 2 (2007): 2.


\bibitem{dV} Y.\ Colin de Verdi\`{e}re. ``Sur un nouvel invariant des graphes et un critere de planarit\'e." Journal of Combinatorial Theory, Series B 50, no. 1 (1990): 11-21.

\bibitem{FMMNN} 
E.~Flapan, T.W.~Mattman, B.~ Mellor, R.~Naimi, and R. ~Nikkuni. ``Recent developments in spatial graph theory."
 Knots, links, spatial graphs, and algebraic invariants, 
 81-102, Contemp. Math., 689, Amer. Math. Soc., Providence, RI,  2017. 
		
\bibitem{F2} J.\ Foisy. ``Intrinsically knotted graphs." Journal of Graph Theory 39, no. 3 (2002): 178-187.

\bibitem{F} J.\ Foisy. ``A newly recognized intrinsically knotted graph." Journal of Graph Theory 43, no. 3 (2003): 199-209. 


\bibitem{GMN} N.\ Goldberg, T.\ W.\ Mattman, and R.\ Naimi. ``Many, many more intrinsically knotted graphs." Algebraic \& Geometric Topology 14, no. 3 (2014): 1801-1823.


\bibitem{HLS} H.\ Van Der Holst,  L.\ Lov\'asz, and A.\ Schrijver. ``The Colin de Verdi\`ere graph parameter." Graph Theory and Computational Biology (Balatonlelle, 1996) (1999): 29-85.


\bibitem{MMR} T.\ W.\ Mattman, C.\ Morris, and J.\ Ryker. ``Order nine MMIK graphs." Knots, Links, Spatial Graphs, and Algebraic Invariants. Contemporary Mathematics 689 (2015): 103-124.

\bibitem{NOPP} R.\ Naimi, R.\ Odeneal, A.\ Pavelescu, and E.\ Pavelescu. ``The complement problem for linklessly embeddable graphs." preprint, arXiv:2108.12946 (2021)

\bibitem{NPP} R.\ Naimi, A.\ Pavelescu, and E.\ Pavelescu. ``New bounds on maximal linkless graphs." preprint arXiv:2007.10522 (2020)

\bibitem{NPS} R.\ Naimi, E.\ Pavelescu, and H.\ Schwartz. ``Deleting an edge of a 3-cycle in an intrinsically knotted graph gives an intrinsically linked graph." Journal of Knot Theory and Its Ramifications 23, no. 14 (2014): 1450075.

\bibitem{OT} M.\ Ozawa and Y.\ Tsutsumi. ``Primitive spatial graphs and graph minors."
 Rev. Mat. Complut.  20  (2007),  no. 2, 391--406.

\bibitem{RST} N.\ Robertson, P. D.\ Seymour, and R.\ Thomas. ``Linkless embeddings of graphs in 3-space." Bulletin of the American Mathematical Society 28, no. 1 (1993): 84-89.

\bibitem{Sa} H.\ Sachs. ``On spatial representations of finite graphs." In Colloq. Math. Soc. J\'anos Bolyai, vol. 37, pp. 649-662. North-Holland Amsterdam, 1984.


\bibitem{TanYas}
K.\ Taniyama and A.\ Yasuhara. ``Realization of knots and links in a spatial graph".
Topology and its Applications, {112} (2001), 87--109.




\end{thebibliography}

\end{document}